\newtheorem{theo}{Theorem}[subsection]
\newtheorem{prop}[theo]{Proposition}
\newtheorem{lem}[theo]{Lemma}
\theoremstyle{remark}
\newcommand{\R}{{\mathbb{R}}}
\newcommand{\C}{{\mathbb{C}}}
\newcommand{\Z}{{\mathbb{Z}}}
\newcommand{\al}{\alpha} 
\newcommand{\Om}{\Omega} 
\newcommand{\te}{\theta} 
\newcommand{\om}{\omega} 
\newcommand{\si}{\sigma} 
\newcommand{\Ci}{{\mathcal{C}}^{\infty}}
\newcommand{\id}{\operatorname{id}} 
\newcommand{\preq}{{\mathcal{P}}}  
\newcommand{\Hom}{\operatorname{Hom}}
\newcommand{\der}{{\mathcal{L}}}  
\newcommand{\Hilb}{{\mathcal{H}}}   
\newcommand{\Fourier}{{\mathcal{F}}} 
\newcommand{\Sl}{\operatorname{Sl}}
\renewcommand{\top}{\operatorname{top}}
\newcommand{\demi}{{\mathcal{D}}}
\newcommand{\dcomp}{\circ_{\demi}}
\newcommand{\sym}{\operatorname{Sym}} 
\newcommand{\Sp}{\operatorname{Sp}}
\newcommand{\Mp}{\operatorname{Mp}}
\newcommand{\Gl}{\operatorname{Gl}}
\newcommand{\Ml}{\operatorname{Ml}}
\title{A Lefschetz fixed point formula for symplectomorphisms} 
\author{Laurent CHARLES \footnote{Institut de
    Math{\'e}matiques de Jussieu (UMR 7586), Universit{\'e} Pierre et
    Marie Curie -- Paris 6, Paris, F-75005 France.}} 
\begin{document}

\maketitle

\bibliographystyle{plain}


\begin{abstract} 
Consider a compact K{\"a}hler manifold endowed with a prequantum
bundle. Following the geometric quantization scheme, the associated
quantum spaces are the spaces of holomorphic sections of the tensor
powers of the prequantum
bundle. 
In this paper we construct an asymptotic representation of the prequantum bundle
automorphism group in these quantum spaces. We estimate the
characters of these representations under some transversality assumption. The formula obtained generalizes in some
sense the Lefschetz fixed point formula for the automorphisms of the
prequantum bundle preserving its holomorphic structure. Our results
 will be applied in two forthcoming papers to the quantum representation
 of the mapping class group.   
\end{abstract}

Consider a compact K{\"a}hler manifold $M$ endowed with a Hermitian
holomorphic bundle $L \rightarrow M$ whose curvature  is the
fundamental two-form. In the point of view of geometric quantization,
$M$ is the classical
phase space and the space $H^0 (M, L)$ of holomorphic sections of $L$
is the quantum
space. 

The group of holomorphic automorphisms of $L$ acts naturally on the
quantum space. Furthermore, if the higher cohomology groups ($H^q(M,
L),$ $q \geqslant 1$)  of the sheaf of holomorphic sections of $L$ are
all trivial, the holomorphic Lefschetz fixed point formula expresses
the characters of this representation in
terms of characteristic classes of $M$ and $L$. 

With the physical interpretation in mind, it is natural to  consider
  the prequantum bundle automorphisms instead of the holomorphic automorphisms. These are the automorphisms
of $L$ preserving the Chern connection and the metric but not
necessarily the holomorphic structure. Whereas the group of
holomorphic automorphism is finite dimensional, the group of the
prequantum bundle automorphisms is infinite-dimensional. Its Lie
algebra identifies with the Poisson algebra of $M$. Furthermore each Hamiltonian symplectomorphism of $M$ lifts to a
prequantum bundle automorphism, and if $M$ is simply connected, each
symplectomorphism isotopic to the identity is Hamiltonian.  
The goal of this paper is to
define an asymptotic representation of these automorphisms such that
a suitable version of the Lefschetz fixed point formula holds. Here the terms
asymptotic refers to the semi-classical limit, obtained by
replacing the prequantum bundle $L$ by its $k$-th tensor power with
large value of $k$.

It is convenient to work with the metaplectic correction. Let $\delta
\rightarrow M$ be a half-form bundle, that is a square root of the
canonical bundle of $M$. Such a bundle exists if and only if the
second Stiefel-Whitney class of $M$ vanishes. We will define the notion of half-form
bundle automorphisms.  For the introduction, it is sufficient to know that any symplectomorphism
of $M$ isotopic to the identity lifts to a half-form bundle
automorphism. Furthermore on any component of $M$, this lifts is unique
up to a sign. 

Consider two automorphisms $\Phi_L$ and $\Psi$  of the prequantum bundle and the half-form bundle
respectively which lifts the same symplectomorphism $\Phi$ of $M$. Then we
will define a class $U(\Phi_L,\Psi)$ which consists of sequences 
$$ T_k : H^0(M,L^k \otimes \delta ) \rightarrow H^0 (M, L^k \otimes
\delta), \qquad k=1, 2,\ldots $$
of unitary maps whose Schwartz kernel has a precise asymptotic. Without going into
the complete definition, let us describe the main characteristics. 
\begin{itemize}
\item The Schwartz kernel concentrate on the graph
of $\Phi^{-1}$ in the sense that
$$ T_k (y, x) = O ( k^{-N}), \quad \forall N$$
for any $y$ and $x \in M$ such that  $y\neq \Phi (x)$. 

\item The asymptotic on the graph is given in terms of
$\Phi_L$ and $\Psi$ by
$$  T_k (\Phi(x), x) = \Bigl( \frac{k}{2 \pi} \Bigr)^n \Phi_L(x)^k
  \otimes ( \Psi (x)  + O(k^{-1}) ).$$
\end{itemize}
The precise definition is given in sections \ref{sec:four-integr-oper} and \ref{sec:unitary-maps}. The main properties of these operators are the following.

\begin{itemize}
\item  $U(\Phi_L,\Psi)$ is not empty. For any sequences $(T_k)$ and $(T_k')$ in
$U(\Phi_L,\Psi)$ we have that $$T'_k T_k^{-1} = \id + O(k^{-1})$$ in
uniform norm. 

\item We have an asymptotic representation in the sense that
$$ U(\Phi_L,\Psi) U(\Phi_L',\Psi') = U(\Phi_L \circ \Phi_L',\Psi \circ_{\demi}
\Psi').$$

\item When the graph of $\Phi$ intersects transversally the diagonal, we can
estimate the trace of any $(T_k)$ in $U(\Phi_L, \Psi)$:
\begin{gather} \label{eq:trace}
 \operatorname{ Tr} ( T_k) = \sum_{x = \Phi (x) } \frac{ i^ {m_x}  u_x^k } { | \det ( \id - T_x \Phi ) |^{1/2}} + O(k^{-1}) 
\end{gather}
where for any fixed point $x$ of $\Phi$, $u_x \in \C$ is the trace of
the endomorphism $\Phi_{L}(x)$ and $m_x \in \Z / 4\Z$ depends only
on $T_x \Phi$ and $\Psi (x)$. 
\end{itemize}

Precise statements are given in theorems \ref{sec:unitary-maps-comp} and \ref{sec:trace-estimates}. 
These results are consequences of more general statements where the
half-form line bundle is replaced by any auxiliary line bundle,
cf. theorem \ref{the:P1} and \ref{the:trace-estimate}. However the
quantization with metaplectic correction presented here has the following
particular features. First, we have an asymptotic
representation of a finite cover of the prequantum bundle automorphism
group, whereas with a general auxiliary line bundle we have to
consider a $U(1)$-extension. 
 Second, the formula giving the asymptotic of the trace is much more complicated for a general auxiliary line bundle. It involves the complex structure of $M$ in a essential way, whereas it depends only on the symplectic data in the half-form case.

 The index $m_x$ appearing in the estimate of the trace
 (\ref{eq:trace}) is one of the main point of this paper. It is
 similar to a Maslov or a Conley-Zehnder index. Our definition requires a choice of a complex polarization, whereas the already known definitions of indices for symplectomorphisms involved a choice of a real polarization. Besides the definition given in section \ref{sec:definition-index}, we propose a simple useful characterization in section \ref{sec:char-index}. We also compute the index of the elements of the metalinear and unitary groups. 

The results of the paper relies on the articles \cite{oim_qm}
and \cite{oim_mc}. In \cite{oim_qm} we proposed an elementary
definition of a Fourier integral operator in the context of geometric
quantization of K{\"a}hler manifolds. Previously, a definition was given by Zelditch \cite{Ze}
using the general theory of Toeplitz operators of Boutet de Monvel and
Guillemin \cite{BoGu}. The interest of half-form bundle for these
Fourier integral operators was understood in \cite{oim_mc} where the
spaces $U( \Phi_L, \Psi)$ were introduced. The estimate of the trace
with the definition of the index is new. Of course, it is very similar
to the known formula for the usual Fourier integral operators. It was one of our goal to obtain the closest formula to the usual case. 

In a sequel of this paper we will apply our result to the quantum
representations of the mapping class group defined in topological
quantum field theory. Estimating the character of these
representations with our formula, we will obtain the leading order
behavior of the quantum invariant of some 3-dimensional manifolds in
the large level limit. These asymptotics were initially obtained by
Witten in \cite{Wi} by doing the perturbative theory of some Feynman
path integral and have been rigorously proved only in few cases. The
article \cite{oim_MG} will be devoted to $\Sl (2, \Z)$ and \cite{oim_MCG} to the mapping class group of surfaces with genus greater than 2.  

The layout of this paper is as follows. Sections \ref{sec:half-forms},
\ref{sec:metaplectic-group} and \ref{sec:gener-half-form} are devoted
to linear algebra preliminaries.
In section \ref{sec:half-forms}, we define the
half-form morphisms and the symplectic linear category with
half-forms. The automorphism group of an object in this category is
a concrete realization or the metaplectic group. It is  the subject of
section \ref{sec:metaplectic-group}, where the index of some
of its element is defined. In section \ref{sec:gener-half-form}, we
generalize the previous considerations, which is necessary for the
application to the quantum representation of the mapping class
group. In section \ref{sec:quant-kahl-manif}, we consider the
quantization of K{\"a}hler manifold with an auxiliary line bundle,
introduce the operator quantizing the symplectomorphisms and estimate
their trace. In section \ref{sec:quant-half-form}, we treat the particular case where the
auxiliary bundle is a half-form bundle. In appendix
\ref{sec:linear-quantization}, we introduce a functor from the
category of symplectic vector spaces with polarization and half-form to
the category of Hilbert spaces. Applying this functor to the
automorphism group of a single object, we recover the well-known metaplectic
representation. With this elementary construction in mind, one can
view the quantization of symplectomorphisms studied in this article as a generalization of the
metaplectic representation.

\section{Half-forms} \label{sec:half-forms}

\subsection{Complex structures and canonical lines} 
Let $S$ be a symplectic real vector space. A positive
polarization $E$ of $S$ is a Lagrangian subspace of $S \otimes \C$
such that  
$$\tfrac{1}{i} \om ( x, \overline
{x}) >0$$ for any non-vanishing $x \in E$. Any positive polarization has
a canonical Hermitian
scalar product given by $$ ( x, y) \rightarrow \tfrac{1}{i} \om (x, \overline{
  y}) .$$ The set of  positive polarizations of $S$ is a
contractible topological space. 

For any  positive polarizations, we consider the canonical
line $\wedge^{\top} E^* $ with its associated Hermitian product.
%
%
%
For any positive polarizations $E_a$ and $E_b$, $S \otimes \C$
is the direct sum of $ E_a$ and $\overline{E}_{b}$. Let
$\pi_{E_b, E_a}: E_{b} \rightarrow E_{a}$ be the restriction of
the projection onto  $ E_{a}$ with kernel
$\overline{E}_{b}$. This map is invertible. Define
$$ \Psi_{E_a, E_b} = \pi_{E_b, E_a} ^* : \wedge^{\top} E^*_{a}
\rightarrow  \wedge ^{\top} E^*_{b}$$
Given three positive polarizations, let $\zeta( E_a, E_b,
E_c)$ be the complex number such that 
\begin{gather}  \label{eq:def_zeta}
 \Psi _{ E_a, E_c} = \zeta( E_a, E_b, E_c) \Psi_{ E_b, E_c} \circ
\Psi_{ E_a, E_b}
\end{gather}
Define $\zeta^{1/2}( E_a, E_b , E_c)$ to be the square root depending
continuously on $E_a$, $E_b$ and $E_c$ and taking the value 1 when
$E_a = E_b = E_c$. 

\subsection{Half-form lines} 

For any  positive polarization $E$, a half-form line of $E$ is  
a complex line $\delta$ together with an isomorphism $\varphi : \delta
^{\otimes 2} \rightarrow  \wedge ^{\top} E^*$. We endow $\delta$ with
the scalar product making $\varphi$ a unitary map.

Let us consider the
category $\demi (S)$ with objects the triples $(E, \delta,
\varphi)$ consisting of a  positive polarization together with a half-form line.
The morphisms from $(E_a , \delta_a, \varphi_a)$ to $(E_b,  \delta_b, \varphi_b)$ are the linear maps $\Psi : \delta_a \rightarrow
\delta_b$ satisfying $$ \varphi_b \circ \Psi^{\otimes 2} = \Psi_{E_a, E_b} \circ
\varphi_a.$$
The composition of a morphism $\Psi: (E_a , \delta_a, \varphi_a)
\rightarrow (E_b,  \delta_b, \varphi_b)$ with a morphism $\Psi ' : (E_b , \delta_b, \varphi_b)
\rightarrow (E_c,  \delta_c, \varphi_c)$ is defined as 
$$ \Psi' \dcomp \Psi := \zeta ^{1/2} ( E_a, E_b, E_c) \Psi' \circ \Psi
$$
where the product in the right-hand side is the usual composition of
maps. 

\begin{prop} \label{prop:cat_demi_forme}
The product $\dcomp$ is well-defined and associative. 
For any object $(E, j , \varphi)$, the identity of $\delta$ is
a unit of $( E, \delta , \varphi)$. So $\demi (S)$ is a category. 
Furthermore each morphism is invertible, its inverse being
its adjoint. 
\end{prop}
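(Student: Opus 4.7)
The plan is to verify in turn that $\dcomp$ is well-defined, associative, has the identity as a unit, and that every morphism is invertible with adjoint inverse, relying throughout on the defining relation (\ref{eq:def_zeta}) together with the contractibility of the space of positive polarisations to control the signs of $\zeta^{1/2}$.

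Well-definedness is a direct computation: applying the morphism conditions for $\Psi'$ and then $\Psi$ to
\[
\varphi_c \circ (\Psi' \dcomp \Psi)^{\otimes 2} = \zeta(E_a,E_b,E_c)\,\varphi_c \circ (\Psi')^{\otimes 2} \circ \Psi^{\otimes 2}
\]
yields $\zeta(E_a,E_b,E_c)\,\Psi_{E_b,E_c} \circ \Psi_{E_a,E_b} \circ \varphi_a$, which equals $\Psi_{E_a,E_c} \circ \varphi_a$ by (\ref{eq:def_zeta}). Associativity reduces, after cancelling $\Psi'' \circ \Psi' \circ \Psi$, to the identity $\zeta^{1/2}(E_a,E_c,E_d)\,\zeta^{1/2}(E_a,E_b,E_c) = \zeta^{1/2}(E_a,E_b,E_d)\,\zeta^{1/2}(E_b,E_c,E_d)$. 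Its square is the cocycle relation $\zeta(E_a,E_c,E_d)\zeta(E_a,E_b,E_c) = \zeta(E_a,E_b,E_d)\zeta(E_b,E_c,E_d)$, obtained by expanding $\Psi_{E_a,E_d}$ via the intermediate polarisations $E_b$ and then $E_c$ in two different ways. The ratio of the two sides of the square-root identity is a continuous $\{\pm 1\}$-valued function on quadruples of polarisations, equal to $1$ on the diagonal; by contractibility of the space of positive polarisations it is identically $1$. The same contractibility argument settles the units: (\ref{eq:def_zeta}) together with $\Psi_{E,E}=\id$ gives $\zeta(E_a,E_b,E_b)=\zeta(E_a,E_a,E_b)=1$, and continuity of the square root starting at $1$ forces $\zeta^{1/2}(E_a,E_b,E_b)=\zeta^{1/2}(E_a,E_a,E_b)=1$.

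The heart of the proof is the adjoint-inverse statement, for which the key geometric input is $\pi_{E_a,E_b}^\dagger = \pi_{E_b,E_a}$ with respect to the canonical inner products $\tfrac{1}{i}\omega(\cdot,\overline{\cdot})$ on $E_a$ and $E_b$. I would verify this by decomposing $v_a \in E_a$ as $\pi_{E_a,E_b}(v_a) + \overline{q}$ with $\overline{q} \in \overline{E}_a$, and $\overline{v_b}$ as $\overline{\pi_{E_b,E_a}(v_b)} + q'$ with $q' \in E_b$, and using that $\omega$ vanishes on each of the Lagrangians $E_a, E_b, \overline{E}_a, \overline{E}_b$; both $\omega(\pi_{E_b,E_a}(v_b),\overline{v_a})$ and $\omega(v_b,\overline{\pi_{E_a,E_b}(v_a)})$ then collapse to $\omega(\pi_{E_b,E_a}(v_b),\overline{\pi_{E_a,E_b}(v_a)})$. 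Dualising and extending to $\wedge^{\top}$ gives $\Psi_{E_a,E_b}^\dagger = \Psi_{E_b,E_a}$. Taking Hilbert adjoints in $\varphi_b \circ \Psi^{\otimes 2} = \Psi_{E_a,E_b} \circ \varphi_a$, and using that $\varphi_a, \varphi_b$ are unitary, shows that $\Psi^\dagger : \delta_b \to \delta_a$ satisfies the morphism condition for $(E_b,\delta_b,\varphi_b) \to (E_a,\delta_a,\varphi_a)$.

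To conclude, specialising (\ref{eq:def_zeta}) to $E_c=E_a$ and combining $\Psi_{E_b,E_a} \circ \Psi_{E_a,E_b} = \|\Psi_{E_a,E_b}\|^{2}\,\id$ (from $\Psi_{E_b,E_a} = \Psi_{E_a,E_b}^\dagger$) with $\Psi_{E_a,E_a} = \id$ gives $\zeta(E_a,E_b,E_a) = \|\Psi_{E_a,E_b}\|^{-2}$, a positive real whose continuous square root is $\|\Psi_{E_a,E_b}\|^{-1}$; and the morphism relation forces $\|\Psi\|^2 = \|\Psi_{E_a,E_b}\|$, so
\[
\Psi^\dagger \dcomp \Psi = \zeta^{1/2}(E_a,E_b,E_a)\,(\Psi^\dagger \circ \Psi) = \|\Psi_{E_a,E_b}\|^{-1} \cdot \|\Psi\|^2\,\id_{\delta_a} = \id_{\delta_a},
\]
and symmetrically $\Psi \dcomp \Psi^\dagger = \id_{\delta_b}$. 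The main obstacle throughout is pinning down the sign of $\zeta^{1/2}$; it is resolved each time by the same connectedness argument, applied to a continuous $\{\pm 1\}$-valued function that equals $1$ at the diagonal of the contractible space of positive polarisations.
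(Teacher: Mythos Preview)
Your proof is correct and follows essentially the same approach as the paper. The only difference worth noting is in the final invertibility step: the paper observes that $\Psi^\dagger \dcomp \Psi$ is an automorphism of a half-form line, hence equals $\pm\id$, and then deforms from $E_a=E_b$ to select the $+$ sign; you instead compute $\zeta(E_a,E_b,E_a)=\|\Psi_{E_a,E_b}\|^{-2}$ explicitly and combine it with $\|\Psi\|^2=\|\Psi_{E_a,E_b}\|$ to get $\id$ directly. Both arguments ultimately rely on the same connectedness principle to fix the sign of $\zeta^{1/2}$, so this is a minor stylistic variant rather than a different route.
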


\begin{proof}
Because of equation (\ref{eq:def_zeta}), $\Psi' \dcomp \Psi$ is a
morphism. We deduce from the associativity
$$ \Psi_{E_c, E_d} \circ (\Psi_{E_b, E_c} \circ \Psi_{E_a, E_b}) =
(\Psi_{E_c, E_d} \circ \Psi_{E_b, E_c}) \circ \Psi_{E_a, E_b}$$
the cocycle identity 
$$ \zeta ( E_b, E_c, E_d ) \zeta ( E_a, E_b, E_d ) = \zeta ( E_a, E_c,
E_d) \zeta ( E_a, E_b, E_c ) .$$ 
The square root satisfying the same identity, the product $\dcomp$ is
associative. Since $\Psi_{E,E}$ is the identity map, the identity of
$\delta$ is a half-form morphism. Furthermore, 
$$ \Psi_{E_b, E_b } \circ \Psi_{E_a, E_b} = \Psi_{E_a, E_b} , \qquad
\Psi_{E_a, E_b } \circ \Psi_{E_a, E_a} = \Psi_{E_a, E_b}$$
imply that 
$$ \zeta( E_a, E_b, E_b ) = \zeta ( E_a, E_a, E_b) = 1 .$$
So the square root of $\zeta$ satisfies the same equation,
consequently the identity of $\delta$ is
a unit of $(E,  \delta, \varphi)$. 

Since $\overline{E}_{a}$ and $E_{b}$
are Lagrangian spaces, one has
$$ \om ( x - \pi_{E_a, E_b} x , \overline{ \pi_{ E_b, E_a} y} ) = 0 , \qquad  \om(
\pi_{E_a, E_b} x , \overline{y} - \overline{\pi_{E_b, E_a} y} ) = 0$$
for any $x \in E_{a}$ and $y \in E_{b}$. Consequently
$$ \om ( x , \overline{ \pi_{ E_b, E_a} y} ) = \om ( \pi_{E_a, E_b} x
,\overline{y}  )$$
which proves that $\pi_{E_b, E_a}$ is the adjoint of $\pi_{E_a, E_b}$. So
$\Psi_{E_b, E_a}$ is the adjoint of $\Psi_{E_a, E_b}$. Hence the
adjoint $\Psi_{b,a}$ of a half-form morphism $\Psi_{a,b}$ is a
half-form morphism. Furthermore, since the only automorphisms of a
half-form bundle are $\id$ and $-\id$, one has $$ \Psi_{b,a} \dcomp \Psi_{a,b}
= \pm \id, \qquad \Psi_{a,b} \dcomp \Psi_{b,a} = \pm \id.$$
Deforming from $E_a = E_b$ to remove the sign ambiguity, we obtain
that $\Psi_{b,a}$ is the inverse of $\Psi_{a,b}$. 
\end{proof}

\subsection{Symplectic linear category with half-forms} \label{sec:sympl-line-categ}

We introduce now a category $\demi$ with objects the  quadruples
$(S, E, \delta, \varphi)$ consisting of a symplectic vector space with
a positive polarization and a half-form line. Let us define the
morphisms. 

Consider two symplectic vector spaces $S_a$
and $S_b$ with positive polarizations $E_a$ and $E_b$ respectively.   
A symplectic linear isomorphism $g$ from
$S_a$ to $S_b$ sends isomorphically $E_{a}$ to the positive
polarization $g E_{a}$ of $S_b$. So composing the pull-back
by $g^{-1}$ with the morphism $\Psi_{ g E_a, E_b}$ from $\wedge^{\top} (gE_a)^*$ to
$\wedge ^{\top} E_{b}^*$, we obtain an isomorphism 
$$ \Psi_{g, E_a, E_b} := \Psi_{ g E_a, E_b} \circ \bigl( g^{-1} \bigr)^* : \wedge^{\top} E^*_{a}
\rightarrow \wedge^{\top} E^*_{b}.$$
The morphisms
 from $a= (S_a, E_a, \delta_a, \varphi_a)$ to $b= (S_b, E_b, \delta_b,
\varphi_b)$ are defined as the pairs $( g, \Psi)$ consisting of a
symplectic linear isomorphism from $S_a$ to $S_b$ and a linear morphism $
\delta_a \rightarrow \delta_b$ such that  $$ \varphi_b \circ
\Psi^{\otimes 2} = \Psi_{g, E_a, E_b}   \circ
\varphi_a.$$
The composition of a morphism $(g,\Psi): a
\rightarrow b$ with a morphism $(g',\Psi
') : b \rightarrow c$ is defined as
$$ (g', \Psi') \dcomp (g, \Psi) := \bigl( g'g, \zeta ^{1/2} ( g'gE_a,
g' E_b, E_c) \Psi' \circ \Psi \bigr)
$$
where the product in the right-hand side is the usual composition of
maps.  

\begin{prop} \label{prop:cat}
$\demi$ is a category where each morphism is
invertible. 
\end{prop}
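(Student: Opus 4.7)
The proof closely parallels Proposition \ref{prop:cat_demi_forme}, the only new ingredient being the naturality of all constructions under symplectic isomorphisms. The plan is to first establish this naturality and then reduce each categorical axiom to its counterpart already verified in Proposition \ref{prop:cat_demi_forme}. The starting remark is that for a symplectic linear isomorphism $g : S \to S'$ and positive polarizations $F, F', F''$ of $S$, one has $\pi_{gF', gF} = g \circ \pi_{F', F} \circ g^{-1}$, hence $\Psi_{gF, gF'} = (g^{-1})^* \circ \Psi_{F, F'} \circ g^*$, and consequently $\zeta(gF, gF', gF'') = \zeta(F, F', F'')$ (with the same equality for the square roots by continuity).

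Well-definedness of $\dcomp$ comes out by computing $\varphi_c \circ (\Psi'\dcomp \Psi)^{\otimes 2}$: using in turn the morphism equations for $(g, \Psi)$ and $(g', \Psi')$ produces $\zeta(g'gE_a, g'E_b, E_c)\,\Psi_{g', E_b, E_c}\circ \Psi_{g, E_a, E_b}\circ \varphi_a$, and the task is to identify this with $\Psi_{g'g, E_a, E_c}\circ \varphi_a$. Unfolding $\Psi_{h, E, F} = \Psi_{hE, F}\circ (h^{-1})^*$ and using the naturality remark to commute $(g'^{-1})^*$ past $\Psi_{gE_a, E_b}$, the required identity reduces exactly to the defining relation (\ref{eq:def_zeta}) applied to the three positive polarizations $g'gE_a, g'E_b, E_c$ of $S_c$.

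For associativity, expanding both bracketings of three composable morphisms $(g,\Psi),(g',\Psi'),(g'',\Psi'')$ yields $\Psi''\circ\Psi'\circ\Psi$ on the nose, scaled by a product of two $\zeta^{1/2}$ factors. Applying the naturality remark to replace $\zeta(g'gE_a, g'E_b, E_c)$ by $\zeta(g''g'gE_a, g''g'E_b, g''E_c)$, the equality of the squares of these scalars becomes precisely the cocycle relation for $\zeta$ in $S_d$ that was extracted inside the proof of Proposition \ref{prop:cat_demi_forme}; the equality of the square roots themselves is then forced by continuity together with their common value $1$ at coinciding polarizations. That $(\id_{S_a}, \id_{\delta_a})$ is a unit follows from $\Psi_{\id, E_a, E_a}=\id$ combined with the identities $\zeta(gE_a, gE_a, E_b) = \zeta(gE_a, E_b, E_b) = 1$, already established in the same proof.

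Invertibility is handled by reducing to Proposition \ref{prop:cat_demi_forme}. Given a morphism $(g, \Psi) : a \to b$, introduce the intermediate object $b' := (S_b, gE_a, \delta_a, (g^{-1})^*\!\circ\varphi_a)$ and factor $(g, \Psi) = (\id_{S_b}, \Psi)\dcomp (g, \id_{\delta_a})$: the first factor is a morphism $a \to b'$ with obvious inverse $(g^{-1}, \id_{\delta_a})$, and the second is a morphism between objects sharing the underlying symplectic space $S_b$, i.e.\ an arrow in the subcategory canonically identified with $\demi(S_b)$, hence invertible by Proposition \ref{prop:cat_demi_forme}. I expect the main obstacle to be purely notational: keeping straight which polarization belongs to which symplectic vector space as the symplectic isomorphisms move them around, so that each $\zeta$ and each $\Psi_{\cdot,\cdot}$ is unambiguously interpreted.
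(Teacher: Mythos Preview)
Your proof is correct and follows essentially the same strategy as the paper: reduce everything to Proposition~\ref{prop:cat_demi_forme} via the naturality of the constructions under symplectic isomorphisms. The paper's version is simply terser, doing the reduction in one stroke by choosing identifications of $S_a$ and $S_b$ with a single $S$ so that $g$ becomes the identity, whereas you verify each categorical axiom separately; both arguments rest on the same naturality observation you spell out at the start.
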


\begin{proof} This follows from  proposition
  \ref{prop:cat_demi_forme}. Indeed, given a morphism $(g, \Psi): a
  \rightarrow b$, one
  may identify $S_a$ and $S_b$ with $S$ through symplectomorphisms in
  such a way that $g$ becomes the identity. Then $\Psi$ is a morphism from $(E_a, \delta_a,
  \varphi_a)$ to $( E_b, \delta_b, \varphi_b)$ in the category $\demi
  (S)$. With these identifications, the composition of morphisms in $\demi$
  corresponds to the composition in $\demi (S)$.
\end{proof}

In appendix \ref{sec:linear-quantization}, we will define a functor
from the category $\demi$ to the category of Hilbert spaces.

\section{Metaplectic group} \label{sec:metaplectic-group}

\subsection{The automorphism group}

Consider a fixed symplectic vector space $S$ together with a positive
polarization $E$. Denote by $\Sp (S)$ the group of linear
symplectomorphism. For any $g \in \Sp (S)$, observe that the
endomorphism $\Psi_{ g, E, E}$ of $\wedge ^{\top} E^*$ is the multiplication by $ \det ( g^{-1}\pi_{E, gE} : E
\rightarrow E)$. 

Let $(\delta, \varphi)$ be a half-form line of $(S,E)$. Then
identifying the automorphisms of $\delta$ with complex numbers, the automorphisms
of $(S, E, \delta , \varphi)$ are the pairs $(g, z)$
consisting of a linear symplectomorphism $g \in \Sp (S)$ with a complex number
$z$ such that 
$$ z^2 = \det ( g^{-1}  \pi_{E, gE} : E \rightarrow E) .$$ 
The composition of two automorphisms is given by 
$$ (g', z' ) \circ_{\demi} ( g ,z ) =  \bigl( g' g , \zeta ^{1/2} ( g'gE,
g' E, E) z' z  \bigr)   .$$
Since the previous formulas don't depend on $( \delta,\varphi)$, we
denote by $\Mp (S,E)$ the group of automorphisms of $(S, E, \delta , \varphi)$.
The projection
$$ \Mp (S, E) \rightarrow \Sp (S) $$
is two to one. 
Recall that $\Sp ( S)$ is connected with fundamental
group $\Z$. The metaplectic group is defined as the 2-cover of the symplectic
group. 

\begin{prop} The group $\Mp (S,E ) $ is connected, so it is isomorphic to the
  metaplectic group.  
\end{prop}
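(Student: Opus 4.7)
The map $(g, z) \mapsto g$ realizes $\Mp(S, E)$ as a two-sheeted topological covering of the connected group $\Sp(S)$: local continuous sections exist by choosing a local continuous branch of the square root of the nowhere vanishing map $g \mapsto \det(g^{-1} \pi_{E, gE})$. Consequently $\Mp(S,E)$ has at most two connected components, and it is connected precisely when the monodromy homomorphism $\pi_1(\Sp(S)) \to \Z/2$ associated with this cover is nontrivial. Since $\pi_1(\Sp(S)) \cong \Z$ admits a unique index-two subgroup, connectedness will automatically identify $\Mp(S, E)$ with the metaplectic group defined as the $2$-fold cover of $\Sp(S)$.

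The strategy is therefore to test the monodromy on an explicit generator of $\pi_1(\Sp(S))$. Using the Hermitian structure on $E$ to identify it with $\C^n$, one recovers the standard inclusion $\U(n) \hookrightarrow \Sp(S)$, which is a homotopy equivalence, and I would take as generator the loop $u_t = \diag(e^{it}, 1, \ldots, 1)$, $t \in [0, 2\pi]$. Unitary maps preserve $E$, so along this loop $u_t E = E$ and $\pi_{E, u_t E}$ is the identity of $E$; hence $u_t^{-1} \pi_{E, u_t E}$ acts on $E \cong \C^n$ as multiplication by $u_t^{-1}$, with determinant $e^{-it}$. The continuous square root taking the value $1$ at $t = 0$ is therefore $z_t = e^{-it/2}$, which equals $-1$ at $t = 2\pi$. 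The lifted path $(u_t, z_t)$ thus connects $(\id, 1)$ to $(\id, -1)$ in $\Mp(S,E)$, and the proposition follows.

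The only facts to check are that $\U(n) \hookrightarrow \Sp(S)$ is a homotopy equivalence (classical: $\Sp(2n, \R)$ deformation retracts onto its maximal compact subgroup) and that the loop $u_t$ generates $\pi_1$, i.e.\ that $\det \colon \U(n) \to \U(1)$ induces the standard isomorphism on $\pi_1$. Both are classical, so there is no genuine obstacle. The substantive point of the argument is the observation that $\pi_1(\Sp(S))$ is carried entirely by unitary loops, along which $gE = E$ and the projection $\pi_{E, gE}$ trivializes, so the determinant entering the definition of $\Mp(S,E)$ collapses to the ordinary unitary determinant and its square root is transparent to compute.
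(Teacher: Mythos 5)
Your proof is correct and takes essentially the same approach as the paper: the paper also exhibits a path $\theta \mapsto (g_\theta, e^{-i\theta/2})$, with $g_\theta$ a unitary fixing $E$ and multiplying a single line of $E$ by $e^{i\theta}$, connecting $(\id, 1)$ to $(\id, -1)$. You supply more explicit covering-space framing (identifying the monodromy with the unique index-two subgroup of $\pi_1(\Sp(S)) \cong \Z$ and invoking the $\U(n)$ retraction), but the essential computation — that along a unitary loop $\pi_{E, gE}$ trivializes and the determinant reduces to the ordinary unitary determinant — is the same.
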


\begin{proof}  Choose a line $D$ of $E$. Then for any $\theta \in \R$, consider the
symplectomorphism $g_\theta$ which preserves $D$ and $E$ and such that
for any $x \in E$, 
$$ g_{\theta}  x = \begin{cases} \exp ( i
\theta) x \qquad  \text{ if } x \in D \\  x   \qquad \text{ if } x \perp D
\end{cases} $$
Then $ \theta \rightarrow (g_\theta, \exp ( -i \theta /2 ))$ is a path
of automorphisms connecting
$( \id, 1)$ with $( \id,  -1)$.  
\end{proof}

\subsection{Definition of the index}  \label{sec:definition-index}

Consider the complex structure $j$ of $S$ such that $\ker (j - i \id) = E$. Then $(X, Y) \rightarrow \om (X, jY)$ is a scalar product of
$S$. Let $\sym (S, j)$ be the space of linear endomorphisms of $S$ symmetric
with respect to this scalar product. For any $A \in \sym (S, j)$,
define the square root
$$ \operatorname{det}^{1/2} \bigl(\tfrac{1}{2} \id  +  i A \bigr) $$
in such a way that it depends continuously on $A$ and is positive for
$A=0$. It is well-defined because $\sym (S, j )$ is contractible, as a vector
space. Observe also that any symmetric endomorphism $A$ is diagonalisable with a real
spectrum so that $\tfrac{1}{2} \id + i A$ is invertible.  

Let $\Sp_* (S)$ be the set of symplectic linear isomorphism $g$ of $S$ such that $\id - g $
is invertible. For any such $g$,  it is easily checked that 
$$ A( g) :=  \tfrac{1}{2}  ( \id + g  ) \circ ( \id - g ) ^{-1} \circ j 
$$
belongs to $\sym (S, j)$. Denote by $\Mp_* (S,E)$ the subset of $\Mp (S,E) $
consisting of the pairs $( g, z)$ such that $g \in \Sp _* (S) $. 

\begin{prop} \label{prop:def-index}
For any automorphism $(g, z) \in \Mp_* (S,E) $, one has
$$  z \operatorname{det}^{1/2} \bigl( \tfrac{1}{2} \id +  i A ( g)  \bigr) =  \frac{ i ^{m ( g, z)}
}{|\det(\id - g)  |^{1/2}}$$
where $m(g, z ) \in \Z /  4 \Z $. 
\end{prop}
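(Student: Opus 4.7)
The plan is to square the claimed identity and show that
$$z^2 \det\bigl(\tfrac12\id + iA(g)\bigr) = \frac{(-1)^n}{\det(\id - g)}.$$
Since $\det(\id - g)$ is a nonzero real number, this forces the square of $z\det^{1/2}(\tfrac12\id + iA(g))\cdot |\det(\id-g)|^{1/2}$ to be $\pm 1$, so the quantity itself is a fourth root of unity and $m(g,z) \in \Z/4\Z$ is well-defined.

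First I would compute $z^2$. Decompose $S\otimes\C = E \oplus \overline E$ and let $\alpha = p_E \circ g|_E : E \to E$, where $p_E$ is the projection onto $E$ along $\overline E$. Since $gE$ is again a positive polarization, $gE \cap \overline E = 0$, which forces $\alpha$ to be invertible. Unwinding the definition of $\pi_{E,gE}$ shows $\pi_{E,gE}(e) = g(\alpha^{-1}e)$, hence $g^{-1}\pi_{E,gE}|_E = \alpha^{-1}$ and $z^2 = \det(\alpha)^{-1}$.

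Next I would compute $\det(\tfrac12\id + iA(g))$ on $S \otimes \C$. Using $j^2 = -\id$ together with the standard projection formulas $p_E = \tfrac12(\id - ij)$ and $p_{\overline E} = \tfrac12(\id + ij)$, I would verify the algebraic identity
$$\bigl(\tfrac12\id + iA(g)\bigr)\cdot (ij)\cdot (\id - g) = p_{\overline E} + p_E g,$$
obtained by expanding $A(g) = \tfrac12(\id+g)(\id-g)^{-1}j$ and regrouping terms. This is the crux of the argument and the step I expect to be the main obstacle, since the complex structure $j$ does not commute with $g$, so one has to place the factor $(\id-g)$ carefully on the right and keep track of signs. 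In block form with respect to $E \oplus \overline E$, the right-hand side is upper triangular with diagonal blocks $\alpha$ and $\id_{\overline E}$, so its determinant is $\det(\alpha)$. Combined with $\det(ij) = (-1)^n$ (which follows from $\det(j) = i^n(-i)^n = 1$), this gives
$$\det\bigl(\tfrac12\id + iA(g)\bigr) = \frac{(-1)^n \det(\alpha)}{\det(\id - g)}.$$

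Multiplying by $z^2 = \det(\alpha)^{-1}$ yields the squared identity announced above. The continuous branch of the square root fixed by the condition that $\det^{1/2}(\tfrac12\id + iA)$ is positive at $A=0$ then selects a single fourth root among $\{\pm 1, \pm i\}$, and $m(g,z)$ is the corresponding element of $\Z/4\Z$.
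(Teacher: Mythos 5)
Your proof is correct and follows essentially the same route as the paper: both compute $z^2 = \det^{-1}(\pi g : E \to E)$ and express $\tfrac12\id + iA(g)$ in terms of the projectors $\pi, \overline\pi$ onto $E$, $\overline E$ to get $\det(\tfrac12\id + iA(g)) = (-1)^n\det(\pi g: E\to E)/\det(\id - g)$, then combine. The paper writes the key algebraic identity as $\tfrac12\id + iA(g) = -j(\overline\pi - \pi g)(\id - g)^{-1}j$, which after right-multiplication by $(ij)(\id - g)$ is exactly your $p_{\overline E} + p_E g$, so the two manipulations are equivalent up to reorganization.
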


\begin{proof} Since the inverse of $(\id - g )^{-1} j $ is $-j ( \id -
  g)$, one has
\begin{xalignat*}{2}  
 \tfrac{1}{2} \id +  i A (g) =  &  \tfrac{1}{2} \bigl( -j ( \id  - g )  +
 i (\id + g ) \bigr) ( \id - g )^{-1} j \\ 
=  & -j  ( \overline{\pi}  - \pi g )(\id
- g )^{-1} j  
\end{xalignat*}
where $\pi = \tfrac{1}{2}( \id -  i j )$ is the projector of $S
\otimes \C$ with image $E$ and kernel $\overline{E}$. From this one
deduces that
\begin{gather*}  
 \det \bigl( \tfrac{1}{2} \id +  i A (g) \bigr) =   \frac{\det(
   \overline{\pi} - \pi g) }{\det ( \id - g )} 
=  ( -1)^n \frac{\det ( \pi g : E \rightarrow
E )}{ \det ( \id - g ) }   
\end{gather*} 
The inverse of $\pi_{E, gE}$ is the restriction of $\pi$ from $gE$ to $E$. So 
\begin{gather} \label{eq:2}
 z^2 = \det ( g^{-1} \pi_{E, gE} : E \rightarrow E) = \operatorname{det}^{-1} ( \pi g : E \rightarrow
E )  .
\end{gather}
The two previous equations imply that  
\begin{gather} \label{eq:square}
 z^ 2 \det \bigl( \tfrac{1}{2} \id +  i A (g) \bigr) =  \frac{(-1)^n}{  \det ( \id - g )
},
\end{gather}
which concludes the proof.
\end{proof} 

We call $m(g,z)$ the index of $(g,z)$. This defines a locally constant
map $m$ from $\Mp _* (S,E)$ to $ \Z / 4 \Z$ which distinguishes the
components of $\Mp_* (S,E)$, as proves the following proposition.
\begin{prop} $\Mp_* (S,E) $ has four components: $m^{-1} (0)$, $m^{-1} (1)$, $m^{-1} (2)$ and $m^{-1} (3)$. 
\end{prop}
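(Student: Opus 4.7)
The plan is to prove that $m \colon \Mp_*(S,E) \to \Z/4\Z$ realizes all four values and that each preimage $m^{-1}(k)$ is path-connected. The map $m$ is locally constant because all the ingredients of Proposition~\ref{prop:def-index} depend continuously on $(g,z)$ and $i^{m}$ determines $m$ uniquely in $\Z/4\Z$. Consequently the four sets $m^{-1}(k)$ form a partition of $\Mp_*(S,E)$ into open-and-closed subsets, and every connected component lies in exactly one of them; the statement reduces to non-emptiness and path-connectedness of each $m^{-1}(k)$.

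For non-emptiness I would square the defining relation of $m$ and compare with equation~(\ref{eq:square}) to obtain
$$
(-1)^{m(g,z)} = (-1)^n \operatorname{sign}\bigl( \det ( \id - g ) \bigr),
$$
so that the parity of $m$ is determined by $g$ alone through the sign of $\det(\id - g)$, while changing $z$ into $-z$ flips $m$ by $2 \bmod 4$. It therefore suffices to exhibit one element of $\Sp_*(S)$ with positive $\det(\id-g)$ and one with negative $\det(\id-g)$; the two choices of lift $z$ then cover the four values of $m$. Take $g = -\id$ for the positive sign, and for the negative sign a block-diagonal $g$ built from one hyperbolic block $\diag(\lambda, 1/\lambda)$, $\lambda > 1$, and $-\id$ on the remaining symplectic planes.

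For path-connectedness I rely on two classical facts: (a) $\Sp_*(S)$ has exactly two path-connected components, distinguished by the sign of $\det(\id - g)$; (b) the natural map $\Mp_*(S,E) \to \Sp_*(S)$ is a two-sheeted covering. Fact (b) is clear from the definition, $z$ being one of the two square roots of the nonzero number $\det(g^{-1}\pi_{E,gE}\colon E \to E)$. Fact (a), which is the genuine obstacle, I would establish through the normal form for symplectic endomorphisms: up to conjugation and deformation inside $\Sp_*(S)$, any $g$ reduces to a product of elementary blocks (elliptic, positive hyperbolic, and blocks with eigenvalue $-1$), and the sign of $\det(\id-g)$, equal to $(-1)^{h}$ where $h$ is the number of positive hyperbolic blocks, is preserved along each step; any two standard forms with the same value of $h \bmod 2$ are then joined by varying continuously the hyperbolic parameters and rotation angles. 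Granted (a) and (b), the parity formula shows that the two lifts of a given $g$ lie in distinct level sets of $m$, hence a loop in one component of $\Sp_*(S)$ cannot lift to a path between sheets of the cover without forcing $m$ to jump. The covering is therefore trivial over each component of $\Sp_*(S)$, and $\Mp_*(S,E)$ has $2 \times 2 = 4$ connected components, one per level set of $m$.
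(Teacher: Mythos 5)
Your argument is correct and matches the paper's approach: both count components by combining the two-sheeted covering $\Mp_*(S,E) \to \Sp_*(S)$ with surjectivity of $m$, which is read off from equation~(\ref{eq:square}) exactly as you do (the square of the defining relation pins down the parity of $m$ by the sign of $\det(\id - g)$, and the two lifts $\pm z$ of a fixed $g$ differ by $2 \bmod 4$). The one genuinely nontrivial input, that $\Sp_*(S)$ has exactly two path components distinguished by the sign of $\det(\id - g)$, is cited by the paper to \cite{CoZe}, lemma~1.7, and not re-proved; you instead sketch a normal-form argument, and that sketch is the only weak point. Saying ``up to conjugation and deformation inside $\Sp_*(S)$, any $g$ reduces to a product of elementary blocks'' and ``standard forms with the same $h \bmod 2$ are joined by varying hyperbolic parameters and rotation angles'' hides the real difficulty: every deformation must avoid the eigenvalue $1$, so one cannot slide a hyperbolic parameter through $\lambda = 1$ nor freely interpolate between block structures without careful bookkeeping, and this is precisely the content of the Conley--Zehnder lemma. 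Citing it, as the paper does, closes the gap.

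One smaller remark: your loop-lifting detour to establish triviality of the covering over each component of $\Sp_*(S)$ is correct but longer than needed. Once $m$ is onto and $\Mp_*(S,E)$ has at most four components (two-fold cover of a two-component base), the four level sets of the locally constant map $m$ are forced to each be a single component; the paper concludes directly in this way, and only mentions the loop argument as an alternative (using the additional fact from \cite{CoZe} that loops in $\Sp_*(S)$ are contractible in $\Sp(S)$).
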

 Observe also that $m( -\id , e^{i  \frac{\pi}{2} p } )= p $ for $p
 =n$ mod 2. When $S$ is two-dimensional one can can easily
 characterize the index of $(g,z)$ in term of the trace of $g$ and the
 argument of $z$, cf. equation (\ref{eq:ind2d}). 

\begin{proof}
It is proved in \cite{CoZe} lemma 1.7,   that $\Sp_* (S)$ has two
components, distinguished by the sign of $\det ( g - \id)$. So $\Mp
_*(S,E)$ has at most 4 components. To conclude it suffices to prove that
$m$ is onto. This follows from equation (\ref{eq:square}), because the
right hand side can be positive and negative. Actually, it is also
proved in \cite{CoZe} lemma 1.7 that any loop in $\Sp _* (S)$ is
contractible in $\Sp (S)$, which gives another proof that $\Mp _* (S,E)$ has 4 components. \end{proof}

\subsection{A characterization of the index} \label{sec:char-index}

Assume $S$ is two-dimensional.
By choosing a basis $(e,f)$ of $S$
such that the symplectic product of $e$ with $f$ is equal to 1, we
identify $S$ with $\R^2$ and $\Sp (S)$ with $\Sl (
2, \R)$.  For any $g \in \Sp (S)$, 
$$ \det ( \id - g ) = 2 - \operatorname{tr}( g ) .$$
So the set of $g$ with $\det ( \id - g ) <0$ consists of the
hyperbolic elements with negative trace, whereas the set of $g$ with $ \det (
\id - g ) >0$ consists of the elliptic elements together with the
hyperbolic ones with a positive trace. 

Recall that $\Sl ( 2, \R)$ is diffeomorphic to the product of the circle
and the unit disc.  Such a diffeomorphism is the map sending $\te \in S^1$ and
$(u,v) \in D $ into
$$ g ( \te , u , v ) :=  \bigl( 1 - u^2 - v^2 \bigr) ^{-1/2}
\left( \begin{matrix} \cos ( \te ) + u   & - \sin ( \te) + v \\ \sin (
    \te ) + v & \cos ( \te ) - u  \end{matrix} \right) $$
where $D = \{(u,v) / \;   u^2 + v^2
< 1 \}$.

Let $E$ be the positive polarization generated by $e - i f$ and let
$\Mp(S,E)$ be the associated metaplectic group. Let us parametrize
$\Mp (S,E)$ by $S^1 \times D$ 
$$ ( \te, u,v) \rightarrow  ( g ( - 2 \te , u, v) , e^{i \te } ( 1
- u ^2 - v ^2 )^{-\frac{1}{4}} )  $$
Then $\Mp _* (S,E)$ is the image of  $\bigl\{ \cos ( 2 \te) \neq \bigl( 1 -
u ^2 - v^2 \bigr)^{1/2} \bigr\}$. We find easily the index of any
element of $ \Sp_* (S)$ by computing explicitly the index
of one element in each component. cf. figure \ref{fig:index}.

\begin{figure}
\begin{center}
\input{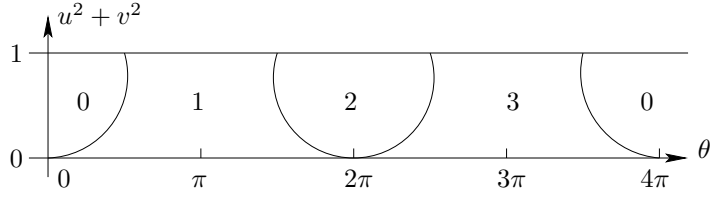}  
\caption{the index} \label{fig:index}
\end{center}
\end{figure}

Observe that when the argument of $z$ runs over an interval $\bigl[ k
\tfrac{\pi}{2}, (k+1) \tfrac{\pi}{2} \bigr[$ with $k \in \Z$, the index $m(g,z)$ takes two
distinct values depending on the sign of $\operatorname{tr}(g) -
2$. Checking the various cases, one obtains the following formula
\begin{gather} \label{eq:ind2d} 
 m( g, z) = k + \tfrac{1}{2} ( 1 - ( -1) ^{k + \epsilon})
\end{gather}
where $k$ and $\epsilon$ are determined by 
$$ \arg (z) \in \bigl[  k
\tfrac{\pi}{2}, (k+1) \tfrac{\pi}{2} \bigr[ , \qquad \epsilon =
\begin{cases} 0 \text{ if } \operatorname{tr} ( g) > 2 \\ 1 \text{
    otherwise}.
\end{cases} $$ 

The formula does not depend on the parametrization. Unfortunately such a simple description doesn't generalize
in higher dimension. Nevertheless we can characterize the index in any
dimension by
considering product. 

Let $S_1$ and $S_2$ be two symplectic vector spaces with
  positive polarizations $E_1$ and $E_2$. Then $E= E_1 \times E_2$ is
  a positive polarization of $S = S_1 \times S_2$. We have a
  morphism from $\Mp (S_1,E_1) \times \Mp ( S_2, E_2) $ to $\Mp ( S,E)$
  sending $((g_1, z_1),$ $(g_2, z_2))$ into $( g_1 \times g_2 , z_1
  z_2)$. Furthermore, 
\begin{gather} \label{eq:indprod}
 m (  g_1 \times g_2 , z_1
  z_2) = m ( g_1, z_1) + m ( g_2 , z_2). 
\end{gather}
So the image of $\Mp_* (S_1,E_1) \times \Mp_* ( S_2,E_2)$ meets each
connected component of $\Mp_* (S,E)$. This gives the following
characterization. 
\begin{prop}  The collection $(m_{S,
  E}: \Mp (S, E) \rightarrow \Z/4 \Z)$, where $(S,E)$ runs over the symplectic vector space endowed
with a positive polarization, is the unique collection of continuous
map satisfying  (\ref{eq:ind2d}) for any two dimensional space and
 (\ref{eq:indprod}) for any product. 
\end{prop}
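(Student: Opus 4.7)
My plan is to prove uniqueness by reducing every case to a product of two-dimensional factors. The preceding proposition gives that $\Mp_*(S,E)$ has exactly four connected components, distinguished by the index. Since $\Z/4\Z$ is discrete, any continuous map on $\Mp_*(S,E)$ is locally constant, hence determined by its value on a single point in each component. So the task reduces to exhibiting one element of product form in each of the four components.

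To produce such representatives, I first decompose $(S,E)$ symplectically. Choosing a Hermitian orthonormal basis $(e_1,\ldots,e_n)$ of $E$ for the canonical scalar product $(x,y)\mapsto \tfrac{1}{i}\om(x,\overline{y})$ and writing $e_j = \tfrac{1}{\sqrt{2}}(x_j - i y_j)$ yields a symplectic splitting $(S,E) = \bigoplus_{j=1}^n (S_j,E_j)$ into two-dimensional factors. Using (\ref{eq:ind2d}), I select elements $\alpha_0,\alpha_1,\alpha_2,\alpha_3 \in \Mp_*(\R^2,E_0)$ of indices $0,1,2,3$ respectively, and form
$$ \beta_k := \alpha_k \times \alpha_0 \times \cdots \times \alpha_0 \in \Mp(S,E), \qquad k \in \Z/4\Z. $$
Because $\det(\id - \bigoplus_j g_j) = \prod_j \det(\id - g_j)$, each $\beta_k$ lies in $\Mp_*(S,E)$; iterated application of (\ref{eq:indprod}) gives $m_{S,E}(\beta_k)=k$, so the four $\beta_k$ visit all four components of $\Mp_*(S,E)$.

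For uniqueness, suppose $(m'_{S,E})$ is a second collection of continuous maps satisfying the two axioms. Axiom (\ref{eq:ind2d}) forces $m' = m$ on every $\Mp_*(\R^2,E_0)$, and iterated application of (\ref{eq:indprod}) then yields $m'_{S,E}(\beta_k) = k = m_{S,E}(\beta_k)$. Local constancy on each of the four components of $\Mp_*(S,E)$ propagates this equality to the whole group, giving $m'_{S,E} = m_{S,E}$. The main point requiring attention is the previous-proposition fact that $\Mp_*(S,E)$ has precisely four components, since without that count the representatives $\beta_k$ could a priori fail to meet every component; but since the components are already distinguished by $m_{S,E}$ and the $\beta_k$ cover all four values of $m_{S,E}$, this is in hand. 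The rest is a bookkeeping argument using results already established in the paper.
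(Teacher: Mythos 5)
Your proof is correct and follows exactly the argument sketched in the paper (the remark preceding the proposition: the image of $\Mp_*(S_1,E_1)\times\Mp_*(S_2,E_2)$ meets each connected component of $\Mp_*(S,E)$, which together with local constancy of $\Z/4\Z$-valued continuous maps on the four-component set $\Mp_*(S,E)$ pins down the index). The only cosmetic difference is that you decompose $(S,E)$ fully into $n$ two-dimensional factors, whereas the paper phrases the inductive step as a single split $S=S_1\times S_2$; these are equivalent, and your explicit representatives $\beta_k=\alpha_k\times\alpha_0\times\cdots\times\alpha_0$ make the bookkeeping transparent.
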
  

\subsection{Unitary group of $E$}  

The subgroup of $\Sp (S)$ consisting of the elements commuting with
$j$ is isomorphic with the unitary group of $E$. The isomorphism is
the map $\iota $ sending $h \in U(E)$ to $g \in \Sp (S)$ whose
complexification acts as
$$ g ( x ) = \begin{cases} h (x) \text{ if } x \in E \\ \overline{h
   } (x) \text{ if } x \in \overline E
   \end{cases}
   $$
Denote by $U_*(E)$ the subset of $U(E)$ consisting of the $h$ such
that $h -id $ is invertible. Obviously $h \in U_*(E)$ iff $\iota (h) \in
\Sp_* (S)$. Next lemma will be used to compare our trace estimates
with the holomorphic Lefschetz fixed point formula.

\begin{lem} \label{lem:indexholom}
For any $h \in U_*(E)$, we have 
$$ \operatorname{det}^{1/2} \bigl( \tfrac{1}{2} \id +  i A ( \iota ( h)
)  \bigr) =  \frac{1}{
  \det ( \id - h^{-1})} .
$$
\end{lem}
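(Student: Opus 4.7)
The plan is to use the splitting $S \otimes \C = E \oplus \overline{E}$ to put $A(\iota(h))$ in a form where both the determinant and the square root can be read off directly.

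Set $g = \iota(h)$. By construction, the complexification of $g$ acts as $h$ on $E$ and as $\overline{h}$ on $\overline{E}$, while $j$ acts as $i$ on $E$ and $-i$ on $\overline{E}$. In particular $g$ commutes with $j$, hence so does $A(g) = \tfrac{1}{2}(\id + g)(\id - g)^{-1} j$, which therefore preserves the splitting. Substituting into the formula on each summand and using the identity $(\id - h^{-1})^{-1} = -h (\id - h)^{-1}$, a two-line computation gives
$$ \bigl( \tfrac{1}{2} \id + iA(g) \bigr) \big|_E = (\id - h^{-1})^{-1}, \qquad \bigl( \tfrac{1}{2} \id + iA(g) \bigr) \big|_{\overline{E}} = (\id - \overline{h})^{-1}. $$
Diagonalising $h$ in a unitary basis with eigenvalues $e^{i\theta_j}$, both restrictions have the common spectrum $\{1/(1 - e^{-i\theta_j})\}_{j=1}^n$: the conjugate $\overline{v}_j \in \overline{E}$ of an eigenvector of $h$ on $E$ is an eigenvector of $\overline{h}$ with eigenvalue $e^{-i\theta_j}$.

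Next I would pin down the correct branch of $\det^{1/2}$ along the segment $s \mapsto s A(g)$ in $\sym(S, j)$. All eigenvalues of $\tfrac{1}{2} \id + is A(g)$ are of the form $\tfrac{1}{2} + i s \mu$ with $\mu \in \R$, and thus lie in the open right half plane. The continuous function
$$ s \longmapsto \prod_{j=1}^n \bigl( \tfrac{1}{2} + i s \mu_j \bigr) $$
is then a continuous square root of $\det(\tfrac{1}{2} \id + isA(g))$ on $[0,1]$, its square equalling the full determinant on $S \otimes \C$ since the eigenvalues pair identically across $E$ and $\overline{E}$; at $s = 0$ it equals $(1/2)^n > 0$. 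By uniqueness of the continuous square root on the contractible space $\sym(S, j)$, it coincides with $\det^{1/2}(\tfrac{1}{2} \id + is A(g))$. Evaluating at $s = 1$ and using $\tfrac{1}{2} + i \mu_j = 1/(1 - e^{-i\theta_j})$ yields the desired formula.

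The only delicate point is this sign determination: one must check that the eigenvalues of $A(g)$ pair identically across $E$ and $\overline{E}$ rather than as complex conjugates. This relies on the commutation of $A(g)$ with $j$, which forces each real eigenspace of $A(g)$ to be $j$-invariant and even-dimensional, so that the corresponding complex eigenvalues on $E$ and on $\overline{E}$ are the same real number. Without this, the product on $E$ would compute the modulus squared of a complex determinant rather than the correct continuous square root.
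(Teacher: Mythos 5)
Your proof is correct, and its computational core agrees with the paper's: diagonalize $h$ in a unitary basis of $E$, observe that $A(\iota(h))$ is block-diagonal with respect to $S\otimes\C = E\oplus\overline{E}$, and read off the eigenvalues of $\tfrac{1}{2}\id+iA(\iota(h))$. Where you diverge is in pinning down the branch of the square root. The paper derives the squared identity, then notes that both sides of the unsquared formula depend continuously on $h$ over the connected set $U_*(E)$, so it suffices to check the single case $h=-\id$, for which $A=0$ and both sides equal $1$. You instead stay at a fixed $h$: since $A=A(\iota(h))$ is real symmetric and commutes with $j$, its eigenvalues on $E$ and on $\overline{E}$ coincide exactly (not merely as complex conjugates), so $\prod_j\bigl(\tfrac12+is\mu_j\bigr)$ taken over $E$ alone is a continuous square root of $\det\bigl(\tfrac12\id+isA\bigr)$ along the radial path $s\mapsto sA$ in $\sym(S,j)$, positive at $s=0$, and uniqueness of $\operatorname{det}^{1/2}$ on that contractible space does the rest. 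Both sign determinations are sound. Yours is more self-contained (no appeal to the connectedness of $U_*(E)$) and makes explicit the pairing of eigenvalues that is only implicit in the paper's matrices $D$ and $\overline{D}$ — which, since the entries $d_i$ are real, are in fact the same matrix. The paper's route, by contrast, is a few lines shorter once one grants that $U_*(E)$ is connected.
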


\begin{proof} 
Let $(e_i)$ be an orthonormal basis of $E$ diagonalising $h$. Then the
matrix of $A(g)$ in the base $(e_1, \ldots , e_n, \overline{e}_1,
\ldots , \overline{e}_n)$ is given by 
$$\left( \begin{matrix} D   & 0  \\ 0 &  \overline{D}  \end{matrix} \right)$$
with $D$ the diagonal matrix with entries $ d_i = \frac{i}{2} (1+u_i)
( 1 - u_i)^{-1} 
$. Here the $u_i$'s are the eigenvalues of $h$, $h(e_i) = u _i e_i$. A
straightforward computation using that $u_i$ is a 
complex number with modulus 1 gives 
$$ ( \tfrac{1}{2} + i d_i ) ( \tfrac{1}{2} +i \overline{d_i}) = \frac{1}{ (1- \overline{u}_i)^2}$$
so that
$$ \operatorname{det} \bigl( \tfrac{1}{2} \id +  i A ( j(h))  \bigr) =  \frac{1}{
  \operatorname{det}^2 ( \id - h^{-1})  }$$
This proves the result up to a plus or minus sign. $U_*(E)$ being
connected, it suffices now to check the result for one
element $h$. It is obvious for $h = -\id$, because $A(\iota (h) ) =0$. 
\end{proof}

Let $\tilde{U} (E)$ be the subgroup of $U(E) \times \C$
$$ \tilde{U} (E) = \{ ( h, z) / \; z^2 = \det h \}$$
It is isomorphic to the twofold cover of $U(E)$. By equation (\ref{eq:2}), $\iota$ lifts
to the embedding from $\tilde{U}(E)$ to $\Mp (S,E)$ sending $(h, z)$ to
$(\iota (h) , z^{-1})$. This map is a group morphism.  Finally observe that
$\tilde{U}_*(E)$ has 2 components, one containing $( -\id , \epsilon)$ and the
other $( -\id , -\epsilon)$ with $\epsilon ^2 = ( -1) ^n$. So the
index $m$ takes two distinct values on $\tilde{U}_*(E)$.

\subsection{Metalinear group of a Lagrangian subspace} 

Let $\Lambda$ be a Lagrangian subspace of $S$. Let $\Sp ( S, \Lambda,
j \Lambda)$ be the subgroup of $\Sp (S)$ consisting of the elements
preserving $\Lambda$ and $j \Lambda$. Let $\Ml ( \Lambda)$ be the
metalinear group  of $\Lambda$, i.e. the subgroup of
$\Gl ( \Lambda) \times \C^*$ consisting of the pairs  $(h , z)$ such
that $ z^2 = \det h $. 

\begin{prop}
For any $(h,z) \in \Ml ( \Lambda)$, there is a unique pair $( g, z') \in \Sp ( S, \Lambda,
j \Lambda) \times \C^*$ such that
\begin{itemize} 
\item $h$ is the restriction of $g$ to $\Lambda$. 
\item $z'/z \in \R_+$.  
\end{itemize}
The map $ j_\Lambda : \Ml ( \Lambda) \rightarrow \Mp
(S,E)$ sending $(h,z )$ into $(g, z')$ is an injective group morphisms,
with image the set of $(g,z')$ such that $g \in  \Sp ( S, \Lambda,
j \Lambda)$. 
\end{prop}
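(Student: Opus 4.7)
The plan is to verify, in order: existence and uniqueness of $g$, existence and uniqueness of $z'$ (these together with injectivity and the image description dispose of the first two assertions), and finally the group morphism property.

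Existence and uniqueness of $g$ follow from a splitting argument. Since $\Lambda$ is Lagrangian and $j$ is $\omega$-compatible, $j\Lambda$ is Lagrangian and transverse to $\Lambda$, giving $S = \Lambda \oplus j\Lambda$; moreover $\omega$ restricts to a non-degenerate pairing $\Lambda \times j\Lambda \to \R$ identifying $j\Lambda$ with $\Lambda^*$. Any $g \in \Sp(S, \Lambda, j\Lambda)$ is determined by its restrictions to $\Lambda$ and $j\Lambda$, and the symplectic condition forces $g|_{j\Lambda}$ to be the transpose inverse of $h := g|_\Lambda$. Conversely, any $h \in \Gl(\Lambda)$ extends uniquely, so $h \mapsto g$ is a group isomorphism $\Gl(\Lambda) \to \Sp(S, \Lambda, j\Lambda)$.

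For $z'$, pick a symplectic basis $(e_i, f_i = je_i)$ of $S$ with $(e_i) \subset \Lambda$; then $\epsilon_i = e_i - if_i$ is a basis of $E$, and a direct computation shows $\pi g : E \to E$ has matrix $(h + (h^T)^{-1})/2$ in this basis. By equation (\ref{eq:2}), $(z')^2 = 2^n/\det(h + (h^T)^{-1})$. Using $(h + (h^T)^{-1})h^T = hh^T + I$ one finds $(z'/z)^2 = 2^n/\det(hh^T + I) > 0$ (since $hh^T + I$ is positive definite), so $z'/z$ is real and nonzero, and the condition $z'/z \in \R_+$ uniquely pins down $z'$. Injectivity of $j_\Lambda$ and the image description are then immediate.

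The delicate point is the group morphism property. The $g$-components of both sides of $j_\Lambda((h_1, z_1)(h_2, z_2)) = j_\Lambda(h_1, z_1) \dcomp j_\Lambda(h_2, z_2)$ both equal $g_1 g_2$, since $h \mapsto g$ is already a morphism. For the $z$-components, the intertwining $\pi_{g_1 E, g_1 g_2 E} = g_1 \pi_{E, g_2 E} g_1^{-1}$ (which holds because $g_1$ is real and commutes with complex conjugation) together with the compatible bases $(g_1 g_2 \epsilon_i), (g_1 \epsilon_i), (\epsilon_i)$ yields, via the defining equation (\ref{eq:def_zeta}) of $\zeta$, the relation $\zeta(g_1 g_2 E, g_1 E, E) = (z'_{12})^2 / [(z'_1)^2 (z'_2)^2]$. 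Hence $z'_{12} = \pm \zeta^{1/2} z'_1 z'_2$, and the ratio $z'_{12}/(z'_1 z'_2) = (z'_{12}/(z_1 z_2))/((z'_1/z_1)(z'_2/z_2))$ is positive real. The morphism therefore reduces to the assertion that the continuous branch $\zeta^{1/2}$ takes a positive real value on the family $\{(g_1 g_2 E, g_1 E, E) : g_i \in \Sp(S, \Lambda, j\Lambda)\}$. This is the main obstacle. On the identity component of $\Sp(S, \Lambda, j\Lambda)^2$ it is automatic from the normalization $\zeta^{1/2} = 1$ on the diagonal; for the other components one either verifies the sign on explicit generators with $\det(g|_\Lambda) < 0$ and propagates, or exhibits a path of triples from the diagonal to $(g_1 g_2 E, g_1 E, E)$ along which $\zeta$ stays off $\R_{\leq 0}$ so that $\zeta^{1/2}$ remains on the principal branch.
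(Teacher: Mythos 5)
Your proof follows the same overall architecture as the paper's: determine $g$ from $h$ via the block-decomposition $S=\Lambda\oplus j\Lambda$ (your $g|_{j\Lambda}=(h^T)^{-1}$ is the paper's $BC^t=\id$), compute $(z'/z)^2>0$ to pin down $z'$, and reduce the group-morphism property to showing that the continuous square root $\zeta^{1/2}(g_1g_2E,g_1E,E)$ is a \emph{positive} real number. Your computation $(z'/z)^2=2^n/\det(hh^T+I)$ is a more explicit version of the paper's observation that $\det(B+C)$ and $\det B$ share a sign, and the reduction of the morphism property to the sign of $\zeta^{1/2}$ is exactly right.

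The gap is that you stop precisely at what you call ``the main obstacle.'' You correctly observe that $\zeta$ is positive real on the family $\{(g_1g_2E,g_1E,E)\}$, that $\zeta^{1/2}$ therefore has locally constant sign, and that $\Sp(S,\Lambda,j\Lambda)^2\cong\Gl(\Lambda)^2$ has four components so the sign need only be checked at one representative per component --- but you then only gesture at two possible strategies (``verify on explicit generators and propagate'' or ``exhibit a path avoiding $\R_{\leq 0}$'') without executing either one. This is not a formality: the whole content of the morphism property is concentrated in this sign check, and nothing you have written rules out $\zeta^{1/2}=-1$ on, say, the component where both $\det h_1<0$ and $\det h_2<0$. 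The paper closes this gap with a short but essential observation: if any two of the three polarizations $G,F,E$ coincide, then $\zeta^{1/2}(G,F,E)=1$. Taking $k$ to be any involution of $\Lambda$ with $\det k<0$ and testing $(h,h')\in\{(\id,\id),(\id,k),(k,\id),(k,k)\}$, in every case at least two of $g'gE$, $g'E$, $E$ coincide (for $(k,k)$ one uses $g^2=\id$ so $g'gE=E$), hence $\zeta^{1/2}=1$ on a representative of each component. That degenerate-triple argument is the missing idea, and your proof is incomplete without it or an equivalent explicit computation.
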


\begin{proof} 
Let $(e_i)$ be a basis of $\Lambda$, orthonormal for the
scalar product $\om ( x, j y)$. Then $g \in \Sp ( S, \Lambda,
j \Lambda)$ if and only if its matrix in the basis $e_1, \ldots , e_n,
je_1$, $\ldots$, $j e_n $ is of the form 
\begin{gather} \label{eq:3}
\left( \begin{matrix} B   & 0  \\ 0 &  C  \end{matrix} \right)
\end{gather}
with $B C^t = \id$. So $g$ is determined by its restriction $h$ to
$\Lambda$ which is an arbitrary element of $\Gl ( \Lambda )$. 

Denote by $\pi$  the projection onto $E$ with kernel $\overline{E}$. Then
the matrix of $ \pi g : E \rightarrow E$ in the basis $(e_i - i je_i)_i$
is $ \frac{1}{2} (  B + C)$. 
Furthermore $\det ( B + C)$ and $\det B$ have
the same sign.  So $\det ( \pi g : E \rightarrow E)$ is
real and have the same sign of $\det (h)$. This proves the existence and unicity of $z'$.   

To prove that $j_\Lambda$ is a morphism, we have to show
that $ \zeta^{1/2} ( g'g E, g'E, E)$ 
is a positive number for any $g, g' \in \Sp ( S, \Lambda,
j \Lambda)$. By the first part of the proof, $\zeta ( g'g E,
g'E, E)$  is real. So it suffices to prove it is positive for one pair $(h, h')$ in each component
of $\Gl(\Lambda ) \times \Gl ( \Lambda)$. If any two of three
polarizations $E$, $F$ and $G$ are equal, then 
$$ \zeta^{1/2} (G, F,  E) = 1 . $$
One deduces  that $\zeta^{1/2} ( g'g E, g'E, E) =1 $ for $(h, h') = ( \id, \id )$, $( \id, k )$, $( k , \id)$ and
$(k,k)$ where $k $ is any involution of $\Lambda$ with negative
determinant.  
\end{proof}

\begin{prop} 
For any $(h,z ) \in \Ml ( \Lambda )$ such that $h - \id $ is
invertible, the index $m$ of $j_\Lambda ( h , z)$ is determined by 
$ z =  i ^m |z| $. 
\end{prop}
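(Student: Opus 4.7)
The plan is to extract the index directly from the defining identity of Proposition~\ref{prop:def-index}, using the explicit block form of $g = j_\Lambda(h,z)$ provided by the previous proposition. Write $(g, z') = j_\Lambda(h, z)$ with $z' = rz$, $r > 0$; then $|z'| = r|z|$, so $z/|z| = z'/|z'|$, and it suffices to prove $z' = i^{m}|z'|$ where $m = m(g, z')$. By Proposition~\ref{prop:def-index}, this amounts to
\begin{equation*}
z' \operatorname{det}^{1/2}\bigl(\tfrac{1}{2}\id + iA(g)\bigr)\,|\det(\id - g)|^{1/2} = \frac{z'}{|z'|},
\end{equation*}
so the entire proof reduces to evaluating the two determinant quantities in terms of $h$ and $r$.

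For the matrix computation, I would use the basis $(e_1, \ldots, e_n, je_1, \ldots, je_n)$ from the previous proof, in which $g = \diag(B, (B^t)^{-1})$ with $B$ the matrix of $h$. A short calculation (using $\det(\id - (B^t)^{-1}) = (-1)^n \det(\id - B)/\det B$) gives $\det(\id - g) = (-1)^n \det(\id - h)^2 / \det h$, which is real. Combined with $z'^2 = r^2 \det h$ and formula (\ref{eq:square}), this yields
\begin{equation*}
\det\bigl(\tfrac{1}{2}\id + iA(g)\bigr) = \frac{(-1)^n}{z'^2 \det(\id - g)} = \frac{1}{r^2 \det(\id - h)^2},
\end{equation*}
a positive real number.

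The main obstacle is then to check that the continuous square root $\operatorname{det}^{1/2}$, defined by continuation from $A = 0$ over the contractible space $\sym(S, j)$, coincides at $A(g)$ with the positive real square root of the above. For this, I would observe that in the same basis the matrix of $A(g) = \tfrac{1}{2}(\id + g)(\id - g)^{-1} j$ is block antidiagonal: conjugation by $\diag(-\id, \id)$ sends $A(g)$ to $-A(g)$. Hence the real spectrum of $A(g)$ is stable under $\lambda \mapsto -\lambda$, so along the linear path $A_t = tA(g)$ in $\sym(S, j)$ the determinant factors as a product of terms $\tfrac{1}{4} + t^2 \lambda_k^2 > 0$. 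Continuation of the positive value $2^{-n}$ at $t = 0$ then forces $\operatorname{det}^{1/2}\bigl(\tfrac{1}{2}\id + iA(g)\bigr) = 1/(r|\det(\id - h)|)$.

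Finally, since $|z|^2 = |\det h|$, the formula for $\det(\id - g)$ gives $|\det(\id - g)|^{1/2} = |\det(\id - h)|/|z|$. Substituting everything into the displayed identity,
\begin{equation*}
z' \cdot \frac{1}{r|\det(\id - h)|} \cdot \frac{|\det(\id - h)|}{|z|} = \frac{z'}{r|z|} = \frac{z'}{|z'|},
\end{equation*}
which gives $z' = i^{m}|z'|$ and hence $z = i^{m}|z|$ by the initial reduction.
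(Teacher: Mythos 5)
Your proof is correct and takes essentially the same approach as the paper's: the heart of the matter is showing that the continuous branch $\operatorname{det}^{1/2}\bigl(\tfrac{1}{2}\id + iA(g)\bigr)$ is a positive real number, which both you and the paper establish by writing $A(g)$ in block anti-diagonal form in the basis $(e_i, je_i)$ and deforming radially to $A=0$. Your explicit evaluation $\det\bigl(\tfrac{1}{2}\id + iA(g)\bigr) = 1/(r^2\det(\id - h)^2)$ and the closing substitution are correct but not strictly needed, since once the square root is known to be positive the equality $z = i^m|z|$ drops directly out of the defining formula in Proposition~\ref{prop:def-index} together with $z'/z>0$.
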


\begin{proof} 
One has to prove that $ \operatorname{det}^{1/2} \bigl( \tfrac{1}{2}
\id + i A(g) \bigr)$ is
positive for any $g \in \Sp ( S , \Lambda, j \Lambda)$. If the matrix
of $g$ is (\ref{eq:3}), the one of $A(g)$ is 
$$ -\tfrac{1}{2} \left( \begin{matrix} 0   & D \\ D^t &  0 \end{matrix} \right)
\qquad \text{ with }  D =( 1 +B) ( 1- B)^{-1} $$ 
So $$\det \bigl( \tfrac{1}{2} \id + i A(g) \bigr) = \det \bigl( \tfrac{1}{2} (
\id + D^t D) \bigr).$$ Deforming $A(g)$ to $0$ through a radial
homothety we obtain that the square root of $\operatorname{det}
\bigl( \frac{1}{2} \id +  i A (g) \bigr )$ is positive.  
\end{proof}

\section{Generalized half-form lines} \label{sec:gener-half-form}

Let $p$ be a positive integer. A generalized half-form line of a symplectic vector space $S$ equipped with a positive
polarization $E$ is a complex line $\delta$ together with an
isomorphism 
$$\varphi : \delta^{\otimes 2 p} \rightarrow  \bigl( \wedge
^{\operatorname{top}} E^* \bigr)^{\otimes p}$$
We have a category whose objects are the quadruples
$(S,E, \delta, \varphi)$. The morphisms from $(S_a, E_a, \delta_a,
\varphi_a)$ to $(S_b , E_b , \delta_b, \varphi_b)$ are the pairs
consisting of a linear symplectomorphism $g : S_a \rightarrow S_b$ together
with a morphism $\Psi : \delta_a \rightarrow \delta_b$ such that 
 $$ \varphi_b \circ
\Psi^{\otimes 2 p } = \Psi_{g, E_a, E_b}^{\otimes p}    \circ
\varphi_a $$
where the map $\Psi_{g, E_a, E_b}$ is defined as in section
\ref{sec:sympl-line-categ}.   
The composition of a morphism $(g,\Psi): a
\rightarrow b$ with a morphism $(g',\Psi
') : b \rightarrow c$ is defined as
$$ (g', \Psi') \dcomp (g, \Psi) := \bigl( g'g, \zeta ^{1/2} ( g'gE_a,
g' E_b, E_c) \Psi' \circ \Psi \bigr)
$$
where the product in the right-hand side is the usual composition of
maps.  

The automorphism group $\Mp _p (S,E)$ of $(S,E, \delta, \varphi)$
consists of the pair $(g, z)$ where $g$ is a linear symplectomorphism
of $S$ and $z$ a complex number such that
$$  z^{2p}  =  \operatorname{det} ^p  ( g^{-1}  \pi_{E, gE} : E
\rightarrow E) .$$
Let $U_{2p}$ be the group of $2p$-th roots of unity. The map sending
$((g,z), u ) $ to $(g, zu)$ is a surjective morphism
$$ \Mp ( S,E) \times U_{2p} \rightarrow \Mp_p (S,E)$$
with kernel $\bigl\{ ( (\id, 1),1), (( \id , -1) , -1 )
\bigr\}$. For any element $(g,z) \in \Mp_p (S,E)$ such that $1$ is not
an eigenvalue of $g$, we define its index in the following
way. Set $p' = p$ if $p$ is even and $p' = 2p $ if $p $ is odd. Then
$m_p(g,z)$ is the unique element of $\Z$ mod $2p' \Z$ such that
\begin{gather*}
  z \operatorname{det}^{1/2} \bigl( \tfrac{1}{2} \id +  i A ( g)
\bigr) =   \frac{ e^{i \frac{\pi}{p'} m _p( g, z)}
}{|\det(\id - g)  |^{1/2}}
\end{gather*}
The existence of $m_p (g,z)$ follows from proposition \ref{prop:def-index}.

\section{Quantization of K{\"a}hler manifolds} \label{sec:quant-kahl-manif}

\subsection{Hilbert space} 

Consider a compact K{\"a}hler manifold $M$ with a prequantization bundle
$L$, that is $L$ is a holomorphic Hermitian
line bundle such that the curvature of its Chern connection is $\frac{1}{i} \om $ where $\om$ is the fundamental two-form of
$M$. Let $K\rightarrow M$ be a holomorphic Hermitian line bundle.
Define the sequence of vector spaces 
$$\Hilb_k  := \bigl\{  \text{holomorphic sections of } L^k \otimes K \bigr\}, \qquad k =1,2,...$$ 
Since $M$ is compact, $\Hilb_k $ is a finite dimensional vector
space. It has a natural scalar product  defined by means of the
Hermitian structure of $ L^k \otimes K $
and the Liouville measure of $M$.

\subsection{Fourier integral operators} \label{sec:four-integr-oper}

Consider a symplectomorphism $\Phi : M \rightarrow M$ together with an
automorphism $\Phi_L$ of the bundle $L$ lifting
$\Phi$ and preserving the connection and the metric. 
To these data is associated a space $\Fourier ( \Phi_L)$ of Fourier integral
operators, that we define now.

 Consider a family of operators $(S_k: \Hilb _k
 \rightarrow \Hilb _k, \; k =1,2 , \ldots) $.  The scalar product of $\Hilb _k
$ gives us an isomorphism 
$$ \Hom (\Hilb _k , \Hilb _k  ) \simeq  \Hilb _k 
\otimes \overline{\Hilb}
_k  .$$
The latter space can be regarded as the space of holomorphic sections of  
$$ (L^k \otimes K ) \boxtimes (\overline{L}^k \otimes \overline{K})
\rightarrow M^2,$$
where $M^2$ is endowed with the complex structure $(j, -j)$. The
section $S_k (x,y)$ associated in this way to $S_k$ is its Schwartz kernel. 

By definition $(S_k)$
is a Fourier integral operator of $\Fourier ( \Phi_L)$ if 
\begin{gather} \label{def:FIO}
 S_k(x,y) =  \Bigl( \frac{k}{2\pi} \Bigr)^{n} F^k(x,y) g(x,y,k) + O
(k^{-\infty}) \end{gather}
where 
\begin{itemize}
\item 
$F$ is a section of  $L \boxtimes \bar{L}
\rightarrow M^2$ such that  $\| F(x,y)
\| <1 $ if $x \neq \Phi(y) $, 
$$ F (\Phi ( x) ,x) = \Phi_L(u)  \otimes \bar{u}, \quad \forall u \in L_x \text{ such that }
  \| u \| = 1, $$
and $ \bar{\partial} F \equiv 0 $
modulo a section vanishing to any order along the graph of $\Phi ^{-1}$.
\item
  $g(.,k)$ is a sequence of sections of  $ K  \boxtimes
  \bar{K}  \rightarrow M^2$ which admits an asymptotic expansion in the $\Ci$
  topology of the form 
$$ g(.,k) = g_0 + k^{-1} g_1 + k^{-2} g_2 + ...$$
whose coefficients satisfy $\bar{\partial} g_i \equiv 0  $
modulo a section vanishing to any order along the graph of $\Phi ^{-1}$.
\end{itemize}

Let us define the principal symbol of $(S_k)$ to be the map $x \rightarrow
g_0(\Phi ( x) ,x)$. Using the Hermitian structure of $K$, we regard it as a
section of the bundle $\Hom (K , \Phi_*K) \rightarrow M$. The principal symbol map 
$$\sigma :
\Fourier ( \Phi_L ) \rightarrow \Ci(M, \Hom (K , \Phi_* K))$$
satisfies the expected property.

\begin{theo} \label{the:P0}
The following sequence is exact
$$ 0 \rightarrow \Fourier (\Phi_L) \cap O(k^{-1}) \rightarrow 
\Fourier ( \Phi_L ) \xrightarrow{\sigma} \Ci(M,\Hom (K , \Phi_*
K)) \rightarrow 0, $$
where the $O(k^{-1})$ is for the uniform norm of operators. 
\end{theo}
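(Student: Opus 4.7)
The plan is to verify exactness at the three positions of the sequence. Exactness on the left is immediate, since $\Fourier (\Phi_L) \cap O(k^{-1})$ sits inside $\Fourier (\Phi_L)$ by definition. The real content is therefore the surjectivity of $\sigma$ and the characterization $\ker \sigma = \Fourier(\Phi_L) \cap O(k^{-1})$ in the uniform operator norm.

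For surjectivity, given $s \in \Ci(M, \Hom(K, \Phi_* K))$, I would build an operator with principal symbol $s$ in three steps. First, I invoke the construction of \cite{oim_qm} to produce a section $F$ of $L \boxtimes \bar L \to M^2$ with $\|F\| < 1$ off the graph of $\Phi^{-1}$, with the prescribed value $\Phi_L(u) \otimes \bar u$ on the graph, and with $\bar\partial F$ flat on the graph. Such an $F$ is a twisted Szeg{\"o} section, constructed by Taylor-expanding in transverse coordinates and Borel-summing. Second, I reinterpret $s$ as a section of $K \boxtimes \bar K$ along the graph (via the Hermitian structure of $K$) and extend it to a smooth $g_0$ on $M^2$ with $\bar\partial g_0$ flat on the graph, using the same formal $\bar\partial$-flat extension technique. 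Third, after multiplying by a cutoff near the graph, the sequence $(k/2\pi)^n F^k g_0$ is almost holomorphic but need not map $\Hilb_k$ to itself; I therefore apply the product $\Pi_k \otimes \bar\Pi_k$ of Szeg{\"o} projectors on both variables. By the Toeplitz calculus of \cite{oim_qm}, the resulting operator lies in $\Fourier(\Phi_L)$ and retains $s$ as its principal symbol.

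For the kernel identification, assume first that $\sigma(S_k) = 0$, so that $g_0$ vanishes along the graph. Combined with the Gaussian concentration $|F|^k \leq C \exp(-c k\, d(\cdot, \mathrm{graph})^2)$ inherited from $\|F\| < 1$ off the graph, a Schur-type $L^2 \to L^2$ estimate yields $\|S_k\| = O(k^{-1})$. Conversely, if $\|S_k\| = O(k^{-1})$, I would test against coherent states localized at $y$ and $\Phi(y)$: a stationary phase computation identifies the leading term of the pairing as a nonzero universal constant times $g_0(\Phi(y), y)$, whereas the operator bound forces this pairing to be $O(k^{-1})$. Hence $g_0$ vanishes on the graph and $\sigma(S_k) = 0$.

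The main obstacle is the third step of the surjectivity argument: showing that the product of Szeg{\"o} projectors preserves the class $\Fourier(\Phi_L)$ and leaves the principal symbol unchanged. This stability is the technical core of the microlocal calculus of \cite{oim_qm}, which I would invoke rather than reprove; once available, both surjectivity and the kernel identification follow along the lines above.
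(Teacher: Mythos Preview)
Your outline is a reasonable sketch of how such an exact sequence is typically established in the Toeplitz/Fourier integral operator setting. However, the paper itself does not give a proof of this theorem at all: immediately after stating theorems \ref{the:P0} and \ref{the:P1} it simply records that they are consequences of theorems 3.1 and 3.2 of \cite{oim_mc}. So there is nothing to compare against beyond a bare citation.

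That said, your sketch is consistent with the machinery developed in \cite{oim_qm} and \cite{oim_mc}, and you correctly identify the nontrivial point: the stability of the class $\Fourier(\Phi_L)$ under composition with the Szeg{\H o} projector, together with preservation of the leading symbol. You are right to invoke this from the cited calculus rather than attempt it from scratch. Your kernel argument (Schur estimate for the forward direction, coherent-state testing for the converse) is the standard route and would go through once the asymptotic description \eqref{def:FIO} is in hand. The only mild caution is that in the converse step one should be slightly careful: the coherent-state pairing produces $g_0(\Phi(y),y)$ up to a factor that depends on the full asymptotic expansion, so strictly speaking one first shows that $(k/2\pi)^{-n} S_k(\Phi(y),y)$ itself is $O(k^{-1})$ uniformly (which follows from the operator bound and the reproducing property of the Szeg{\H o} kernel), and then reads off $g_0|_{\mathrm{graph}}=0$ from the asymptotic expansion. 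This is implicit in what you wrote but worth making explicit.
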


Consider  two symplectomorphisms $\Phi$ and $\Phi ' $. Define the product of
two  sections $\Psi$, $\Psi'$ of $\Hom ( K, \Phi_*K )$ and
$\Hom ( K, \Phi' _*K)$ respectively as the section of
$\Hom ( K , (\Phi' \circ \Phi )_* K) $ given at $x$ by 
\begin{gather} \label{eq:comp_mor}
 \Psi' \circ_\demi \Psi (x) =  \zeta^{1/2} (  g'_{\Phi(x)} g_x 
E_x, g'_{\Phi(x)}  E_{\Phi ( x)},  E_{(\Phi' \circ \Phi)(x)} 
)   \Psi' (
\Phi (x))  
\circ \Psi (x) 
\end{gather}
where $E_y = T^{1,0}_y M$, $g_y = T_y \Phi$ and $g'_y = T_y \Phi'$ for
$y= x$, $\Phi (x)$ or $\Phi' ( \Phi (x))$.  Here the product on the right hand
side is the usual composition of homomorphism.

\begin{theo} \label{the:P1}
Let $\Phi_L$ and $\Phi_L'$ be two automorphisms of the prequantum
bundle $L$ lifting $\Phi$ and $\Phi'$ respectively. If $T \in \Fourier (\Phi_L)$ and $S
\in \Fourier (\Phi_L')$, then $S\circ T$ is a Fourier integral operator of
$\Fourier (\Phi_L ' \circ \Phi_L )$. Its symbol is given by  
$$ \si ( S \circ T ) =  \si (S) \circ_\demi \si (T).
$$
\end{theo}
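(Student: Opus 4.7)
The plan is to compute the Schwartz kernel of the composition as
\begin{equation*}
(S\circ T)_k(x,y) = \int_M S_k(x,z)\, T_k(z,y)\, d\mu_M(z)
\end{equation*}
against the Liouville measure, substitute the oscillatory expansions (\ref{def:FIO}) for both kernels, and apply the Laplace method with complex phase. After substitution the integrand has leading factor $H^k(x,z,y):=(F')^k(x,z)\otimes F^k(z,y)$, the $k$-th power of a section $H$ of a line bundle over $M^3$. By hypothesis $\|H(x,z,y)\|\le 1$ with equality exactly on $\Gamma:=\{x=\Phi'(z),\ z=\Phi(y)\}$, and $\Gamma$ projects diffeomorphically to the graph of $\Phi'\circ\Phi$ via $(x,z,y)\mapsto(x,y)$.

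For $(x,y)$ outside a neighborhood of the graph of $\Phi'\circ\Phi$ the integrand has modulus bounded by $r^k$ for some $r<1$ uniformly, so the integral is $O(k^{-\infty})$. For $(x,y)$ near the graph I localize $z$ near $\Phi(y)$ with a smooth cutoff; the complement is again $O(k^{-\infty})$. On the remaining piece the phase $k\log\|H\|^2$ has a unique nondegenerate maximum at $z_0=\Phi(y)$, so stationary phase produces an asymptotic of the form $(k/2\pi)^n\,\widetilde F^k(x,y)\,\widetilde g(x,y,k)+O(k^{-\infty})$, where $\widetilde F(x,y):=F'(x,\Phi(y))\otimes F(\Phi(y),y)$ and $\widetilde g$ admits an expansion in powers of $k^{-1}$ built from $g',g$ and the inverse of the Hessian of $\log\|H\|^2$ in $z$ at $z_0$. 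The estimate $\|\widetilde F\|<1$ off the graph and the identity $\widetilde F((\Phi'\circ\Phi)(x),x)=(\Phi_L'\circ\Phi_L)(u)\otimes\bar u$ for unit $u\in L_x$ are immediate; the infinite-order $\bar\partial$-flatness of $\widetilde F$ and of the coefficients of $\widetilde g$ along the graph propagates from that of $F,F',g,g'$ because the critical locus $\{z=\Phi(y)\}$ is cut out holomorphically in the natural complex structure used on $M^3$ to define Schwartz kernels. This places $S\circ T$ in $\Fourier(\Phi_L'\circ\Phi_L)$.

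The main step is to match the leading coefficient with the half-form composition law (\ref{eq:comp_mor}). At the critical point $z_0=\Phi(y_0)$ with $x_0=\Phi'(z_0)$, the Hessian of $\log\|H\|^2$ in $z$ is a nondegenerate quadratic form on $T_{z_0}M$ whose associated Gaussian integral depends only on the three positive polarizations $E_{x_0}$, $g'E_{z_0}$ and $g'gE_{y_0}$ (equivalently on $E_{y_0}$, $gE_{y_0}$, $(g')^{-1}E_{x_0}$), where $g=T_{y_0}\Phi$ and $g'=T_{z_0}\Phi'$. Unwinding this Gaussian determinant in terms of the projections $\pi_{E_b,E_a}$ that enter (\ref{eq:def_zeta}) yields a square root of $\zeta(g'gE_{y_0},g'E_{z_0},E_{x_0})$; the branch is forced because both the Gaussian factor and $\zeta^{1/2}$ equal $1$ when the three polarizations coincide and depend continuously on the data. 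Multiplying by the pointwise factor $g'_0(x_0,z_0)\circ g_0(z_0,y_0)\in\Hom(K_{y_0},K_{x_0})$ produces exactly the symbol formula $\sigma(S\circ T)=\sigma(S)\dcomp\sigma(T)$. The hard part, and the only nontrivial one, is this identification of the Gaussian factor with $\zeta^{1/2}$: it reduces the global Laplace computation to the fiberwise linear algebra of Section~\ref{sec:half-forms}, and is what distinguishes the half-form calculus here from the standard Toeplitz composition of \cite{oim_qm}.
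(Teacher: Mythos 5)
The paper itself does not prove theorem~\ref{the:P1}: it only records that theorems~\ref{the:P0} and~\ref{the:P1} are consequences of theorems 3.1 and 3.2 of \cite{oim_mc}. Your proposal therefore cannot be checked against a proof in this paper, but the route you describe---compose Schwartz kernels, localize to the fibered diagonal $\{x=\Phi'(z),\ z=\Phi(y)\}$ where the modulus of $(F')^k\otimes F^k$ is maximal, apply complex stationary phase in $z$, and identify the resulting Gaussian factor with the $\zeta^{1/2}$ cocycle---is the standard one and is almost certainly what the cited work does. The step you flag as the only nontrivial one is indeed the whole content, and your proof leaves it as an assertion: ``unwinding this Gaussian determinant \ldots\ yields a square root of $\zeta$'' is the claim, not an argument. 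In the linear model that computation is precisely what the paper does in the appendix on linear quantization, where one shows $\int_S e^{-\Phi_{bc}(x,y)-\Phi_{ab}(y,z)}\,\mu(y)=(2\pi)^n\,\zeta^{1/2}(E_a,E_b,E_c)\,e^{-\Phi_{ac}(x,z)}$, the branch being fixed by continuity from the coincidence case. The manifold theorem requires osculating the phase to second order at each point of the critical manifold and matching the $z$-Hessian with that linear kernel; this is where the three tangent polarizations $g'gE_{y_0}$, $g'E_{z_0}$, $E_{x_0}$ of formula (\ref{eq:comp_mor}) actually appear, and it is the reduction you must make explicit for the proof to be complete.

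Two intermediate claims are also slightly off, though not fatally. The section $\widetilde F(x,y):=F'(x,\Phi(y))\otimes F(\Phi(y),y)$ is not the exponential factor that stationary phase produces: the complexified critical point $z_c(x,y)$ moves off $\Phi(y)$ as $(x,y)$ leaves the graph. It does agree with the true output to infinite order along the graph, which is all the definition of $\Fourier(\Phi_L'\circ\Phi_L)$ requires, but you should argue this equivalence rather than present the substitution as the result. And the reason $\bar\partial$-flatness propagates is not that ``the critical locus is cut out holomorphically in the natural complex structure on $M^3$''---there is no single such structure, and the phase is neither holomorphic nor antiholomorphic in $z$. The correct mechanism is that the phase and amplitudes are $\bar\partial$-flat along their respective graphs for the product structure $(j,-j)$ on each $M^2$ factor, the $z$-dependence splits into holomorphic and antiholomorphic parts accordingly, and after deforming the $z$-integral to a steepest descent contour the flatness in $(x,y)$ for $(j,-j)$ survives along the graph of $(\Phi'\circ\Phi)^{-1}$. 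Neither point derails the argument, but since your proof invokes them they should be stated correctly.
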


Theorems \ref{the:P0} and \ref{the:P1} are consequences of theorems 3.1 and 3.2 of
\cite{oim_mc}.

\subsection{Trace estimate}

Consider a symplectomorphism $\Phi$ of $M$ together with a lift $\Phi_L$ to the
prequantum bundle. Assume that the graph of $\Phi$ intersects
transversally the diagonal. 

\begin{theo} \label{the:trace-estimate}
For any $(S_k) \in \Fourier ( \Phi_L)$ with symbol $\Psi$, we have
$$ \operatorname{Tr} ( S_k) =  \sum_{x = \Phi (x) } z_x
\operatorname{ det}^{1/2} \bigl( \tfrac{1}{2}  \id + i A_x \bigr) \; u_x^k $$
where for any fixed point $x$ of $M$, 
\begin{itemize} 
\item $z_x$ and $u_x$ are the traces of
$\Psi(x) : K_x \rightarrow K_x  $ and $ \Phi_L (x) : L_x \rightarrow
L_x$ respectively. 
\item $A_x = \tfrac{1}{2}  (\id + T_x \Phi) \circ ( \id - T_x \Phi)^{-1}
  \circ  j_x $
  and the square root of the determinant is determined as in section
  \ref{sec:definition-index}. 
\end{itemize}
\end{theo}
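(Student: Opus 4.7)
The strategy is to write the trace as an integral along the diagonal and then apply a complex Laplace method, with the fixed points of $\Phi$ playing the role of critical points. I would start by writing
$$\operatorname{Tr}(S_k) = \int_M \operatorname{tr}_x S_k(x,x) \, d\mu(x),$$
where $\mu$ is the Liouville measure and $\operatorname{tr}_x$ is the canonical contraction $(L^k \otimes K)_x \otimes (\overline{L^k \otimes K})_x \to \C$. Substituting the asymptotic expansion (\ref{def:FIO}) and using $\| F(x,x) \| < 1$ off the fixed point set, the integrand $(k/2\pi)^n F^k(x,x) g(x,x,k)$ decays exponentially in $k$ outside an arbitrarily small neighborhood of each fixed point. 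Since $M$ is compact and, by the transversality hypothesis, the set of fixed points is finite, the integral reduces up to $O(k^{-\infty})$ to a sum of contributions from small neighborhoods of the fixed points.

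Near a fixed point $x_0$ I would choose K{\"a}hler normal coordinates, a local unitary holomorphic frame of $L$ in which $\Phi_L(x_0)$ acts on $L_{x_0}$ by multiplication by $u_{x_0}$, and a local frame of $K$. Writing $\operatorname{tr}_x F(x,x) = u_{x_0} \exp(-\sigma(x))$, the almost-holomorphy of $F$ together with $\|F\| = 1$ precisely at fixed points forces $\sigma(x_0) = 0$, $d\sigma(x_0) = 0$, and $\operatorname{Re} \sigma''(x_0)$ positive definite; the non-degeneracy of the Hessian is exactly the condition $\det(\id - T_{x_0}\Phi) \neq 0$ coming from the transversality hypothesis. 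A standard complex Laplace expansion on $\R^{2n}$ then yields the contribution
$$u_{x_0}^k \, z_{x_0} \, \operatorname{det}^{-1/2} \sigma''(x_0) + O(k^{-1}),$$
with $z_{x_0} = \operatorname{tr}\Psi(x_0)$ arising from $\operatorname{tr}_{x_0} g_0(x_0, x_0)$, and with the powers of $k$ cancelling because $\dim_\R M = 2n$.

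The heart of the proof, and the main obstacle, is the linear-algebraic identification
$$\operatorname{det}^{-1/2} \sigma''(x_0) = \operatorname{det}^{1/2}\bigl(\tfrac{1}{2}\id + iA_{x_0}\bigr).$$
Computing $\sigma$ to second order at $x_0$ from the Bergman-kernel model twisted by $\Phi_L$ gives an explicit quadratic form involving $T_{x_0}\Phi$, the complex structure $j_{x_0}$, and the projector $\pi$ onto $E_{x_0} = T^{1,0}_{x_0} M$. The identity $\tfrac{1}{2}\id + iA(g) = -j(\overline{\pi} - \pi g)(\id - g)^{-1} j$ used in the proof of Proposition \ref{prop:def-index} then transforms the Gaussian determinant produced by the Laplace method into the form on the right-hand side, and the branches of the two square roots (fixed respectively by the natural orientation of $\R^{2n}$ and by continuous deformation to $g = 0$) are matched by a continuity check at a reference point such as $T_{x_0}\Phi = -\id$. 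All other ingredients (exponential localization to fixed points, the complex stationary phase expansion, and the principal-symbol identification) are standard consequences of the Fourier integral operator calculus developed in \cite{oim_mc}; summing over the finite set of fixed points concludes the proof.
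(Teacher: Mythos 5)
Your proposal follows essentially the same strategy as the paper: reduce the trace to an integral over the diagonal, use $\|F(x,x)\|<1$ off the fixed-point set to localize, write $F(x,x)=\exp(-\varphi)$ near each fixed point, and apply complex stationary phase, with the whole weight carried by the identification of the Hessian of $\varphi$ with $\bigl(\tfrac12\id+iA_{x_0}\bigr)^{-1}$. The paper gets that identification by first invoking Lemma~\ref{lem:derF} (the formula for $\der_X\langle\beta,Y\rangle$ from \cite{oim_qm}) and then solving an explicit linear system to obtain $\operatorname{Hess}\varphi(X,X')=\om\bigl((\tfrac12\id+iA_{x_0})^{-1}X,jX'\bigr)$, after which the determinant in an orthonormal basis for $\om(\cdot,j\cdot)$ is immediate; your route via the Bergman-kernel model and the identity $\tfrac12\id+iA(g)=-j(\overline\pi-\pi g)(\id-g)^{-1}j$ from the proof of Proposition~\ref{prop:def-index} is a workable rearrangement of the same linear algebra, but note that the precise second-order control on $F$ that you attribute to the model is exactly what Lemma~\ref{lem:derF} supplies, so that lemma (or its source, Proposition 2.2 of \cite{oim_qm}) should be cited explicitly rather than implicitly.
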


\begin{proof}
By assumption the Schwartz kernel of $S$ has the form
(\ref{def:FIO}). One has
$$ \operatorname{Tr} ( S_k)  = \Bigl( \frac{k}{2\pi} \Bigr)^{n} \int _M F^k(x,x)
f(x,x,k) \mu _M(x)  + O
(k^{-\infty}) $$ 
Since $|F(x,y)| < 1 $ outside $\Gamma = \{ ( \Phi (x) , x ) / \; x \in M
  \}$, one can restrict the integral to a neighborhood of the fixed
  point set of $\Phi$. Let us write on a
neighborhood of $\Gamma $
$$\nabla ^{L \boxtimes \overline{L}} F = \beta \otimes F $$
In the proposition 2.2 of \cite{oim_qm}, the first derivatives of $\beta$
along $\Gamma$ are computed. Denote by $E_x $ the space $T^{1,0}_x M$. 
\begin{lem} \label{lem:derF}
The form $\beta$ vanishes along $\Gamma$. For any vector fields $X$ and $Y$ of $M^2$, one has
  at any point of $\Gamma$  
$$ \der_X \langle \beta , Y \rangle  = \tfrac{1}{i}
\om_{M \times M^-} ( q (X) , Y) $$
where $\om_{M \times M^-} $ is the symplectic form of the product of
$(M, \om)$ with $(M, -\om)$. And for any $x \in M$,  $q_{(\Phi(x), x)}$ is the projection of $T_{\Phi(x)} M \times T_x M$
onto $\overline{E}_{\Phi(x)} \times E_x$ with kernel the tangent space
of $\Gamma $ at $(\Phi (x), x)$. 
\end{lem}
Let us write on a neighborhood of a fixed point $x_0$ of $\Phi$
$$ F (x,x) = \exp ( - \varphi (x))$$ 
where $\varphi$ is a complex valued function. By lemma
\ref{lem:derF}, the first derivatives of $\varphi$
vanishes at $x_0$. 
\begin{lem} The Hessian of $\varphi$ at $x_0$ is given by  
$$ \operatorname{Hess} \varphi (X, X') (x_0) =  \om ( \bigl(
\tfrac{1}{2} \id + i A_{x_0} \bigr)^{-1}  X, jX' )$$
for any tangent vectors $X, X'$ of $M$ at $x_0$. 
\end{lem}
\begin{proof} 
By lemma \ref{lem:derF}, the Hessian is given by
$$ \operatorname{Hess} \varphi (X, X') (x_0) =  i \om_{M \times M^-} (
q (X,X) , (X',X')  )$$
Let $\pi$ be the projector of $T_x M \otimes \C$ with image $E_x$ and
kernel $\overline{E}_x$. Then $$q (X,X) = (\overline{\pi} (V), \pi (V))$$
for a unique $V \in T_xM \otimes \C$. Using that $\pi = \frac{1}{2} ( \id -
i j_x )$, we obtain that
\begin{xalignat*}{2} 
\operatorname{Hess} \varphi (X, X') (x_0) = & i \om ( \overline{\pi}
(V), X') - i \om ( \pi (V), X')   \\ = & 
  \om ( V, j_xX')
\end{xalignat*}
To compute $V$ one has to solve the following system
$$ \begin{cases} X = T_x  \Phi (Y)  + \frac{1}{2} ( \id +
i j_x ) V \\  X = Y  + \frac{1}{2} ( \id -
i j_x ) V
\end{cases}$$
Adding and subtracting both equations, we obtain 
$$ \begin{cases} 2X = ( \id +  T_x  \Phi)  (Y)  +  V \\  0 =  ( \id -
  T_x  \Phi)  (Y)  - i j_x  V
\end{cases}$$
With the second equation, we compute $Y$ in terms of $V$. Inserting
the result in the first equation, we obtain
$$ V =  \bigl(
\tfrac{1}{2} \id + i A_{x_0} \bigr)^{-1}  X$$
with proves the lemma.
\end{proof}
Now the theorem follows from stationary phase lemma by using that
$\exp ( - \varphi(x_0) ) = u_{x_0}$ and $g( x_0, x_0, k ) = z_{x_0} +
O(k^{-1})$.  Observe furthermore that the Liouville measure is the
Riemannian volume of the metric $\om (X, j_x Y)$. So for an orthogonal
basis $X_1, \ldots, X_{2n}$, we have
$$ \mu_M( X_1 \wedge \ldots \wedge X_{2n} ) =1 $$ 
and 
$$ \det \bigl( \operatorname{Hess} \varphi ( X_i, X_j) (x_0)
\bigr)_{i,j}  =
\operatorname{det} ^{-1} \bigl( \tfrac{1}{2} \id + i A_{x_0} \bigr)
$$ which leads to the result. 
 \end{proof}

\subsection{The holomorphic Lefschetz fixed point formula} 

As in the previous section, consider a symplectomorphism $\Phi$ of $M$ together with a lift $\Phi_L$ to the
prequantum bundle. Assume that $\Phi$ is a holomorphic map. Since
the holomorphic structure of $L$ is characterized by the connection,
$\Phi_L$ is a holomorphic bundle homomorphism. Consider a holomorphic
bundle homomorphism $\Psi$ of $K$ lifting $\Phi$. Then we have a map
$$ (\Phi_L^k \otimes \Psi)_* : \Hilb_k \rightarrow \Hilb_k$$
defined as the inverse of the pull-back by $\Phi_L^k \otimes \Psi$.
More generally, $\Phi_L^k \otimes \Psi$ acts on the $q$-th
cohomology group of the sheaf of holomorphic sections of $L^k \otimes
K$. One defines the holomorphic Lefschetz number
$$ L ( \Phi_L^k \otimes \Psi) := \sum_{ q=0}^n ( - 1)^q
\operatorname{Tr} \bigl( (\Phi_L^k \otimes \Psi) _* |_{ H^q (M, L^k
  \otimes K)} \bigr) $$  
Then assuming that the graph of $\Phi$ intersects transversally the
diagonal, the holomorphic Lefschetz fixed point theorem,
cf. \cite{AtBo} theorem 4.12, says
$$ L ( \Phi_L^k \otimes \Psi) = \sum_{x = \Phi(x) } \frac{z_x u_x^k }
{ \det \bigl( \id - h_x^{-1}  \bigr)}$$ 
where the complex numbers $z_x$ and $u_x$ are defined as in theorem \ref{the:trace-estimate}
and $h_x $ is the holomorphic tangent map of $\Phi$ at $x$, that is the restriction of $ T_x \Phi \otimes \C$ to $E_x = T^{1,0}_xM$. 

When $k$ is sufficiently large, Kodaira's vanishing theorem implies
that $H^q (M, L^k \otimes K) =0$ for every positive $q$. If the latter
is the case, the holomorphic Lefschetz number is the trace of the action of
$ \Phi_L^k \otimes \Psi$ on $\Hilb_k$. 

Furthermore, the family of operators $( (\Phi_L^k \otimes \Psi) _*
|_{ \Hilb_k }, k=1,2,\ldots ) $ is a Fourier integral operator of $\Fourier( \Phi_L)$
with symbol $\Psi$. This is an easy consequence of the fact that the
sequence $(\id_{\Hilb_k}, k=1,2, \ldots)$  belongs to $\Fourier ( \id_L)$ and
has the symbol $\id_K$. 

So theorem \ref{the:trace-estimate} gives the asymptotic behaviour of the
Lefschetz numbers. That the result agrees with holomorphic Lefschetz
fixed point theorem is a consequence of lemma \ref{lem:indexholom}.

\section{Quantization with half-form bundle} \label{sec:quant-half-form}

\subsection{Hilbert spaces} 
Let $p$ be a positive integer. Let $(\delta,
\varphi)$ be a generalized half-form bundle of $M$, i.e. $\delta$ is a complex line bundle over $M$
and $\varphi$ is an isomorphism form $\delta^{\otimes 2p }$ to
$\bigl( \wedge^{n,0} T ^*M \bigr)^{\otimes p}$. So at each point $x \in M$, we have a positive
polarization $$T^{1,0} _{x} M = \ker( j_x - i \id)$$  of $T_x M$ and
a generalized half-form line $( \delta_x, \varphi_x)$ of this
polarization. 

The half-form bundle $\delta$ has a natural metric and holomorphic structure
such that $\varphi$ is an isomorphism of Hermitian
holomorphic bundle. We apply the previous constructions with $K
= \delta$, which defines the  Hilbert space $\Hilb _k$. 

\subsection{Unitary maps} \label{sec:unitary-maps}

Let $\Psi$ be an
automorphism of the bundle $\delta$ lifting a symplectomorphism $\Phi$
of $M$. One says that
$\Psi$ is a half-form bundle automorphism  if for any point $x$
of $M$, $$\Psi(x) : \delta_x \rightarrow \delta_{\Phi(x)}$$ is a
morphism of half-form lines, cf. section \ref{sec:sympl-line-categ}
for $p=1$ and section \ref{sec:gener-half-form} for any $p$. Observe that for any two
half-form bundle automorphisms $\Psi$ and $\Psi'$, the product $\Psi'
\circ_\demi \Psi$ defined in (\ref{eq:comp_mor}) is a half-form bundle morphism. 

\begin{theo} \label{sec:unitary-maps-comp}
For any automorphisms $\Phi_L, \Psi$ of the prequantum
bundle $L$ and the half-form bundle $\delta$  respectively which lift the same
symplectomorphism of $M$, let $U( \Phi_L, \Psi)$ be the set of unitary
Fourier integral operators of $\Fourier( \Phi_L)$ with symbol
$\Psi$. Then 
\begin{itemize}  
\item $U( \Phi_L, \Psi)$ is not empty.
\item $ T_k \in U(\Phi_L, \Psi) \text { and } T'_k \in U(\Phi_L', \Psi')
\Rightarrow (T'_k T_k
) \in  U(\Phi_L' \circ \Phi_L , \Psi' \circ_\demi \Psi )
$
\item 
$ U( \id_L, \id_\delta)$ consists of the sequences $\exp ( i k^{-1}
T_k)$ where $(T_k)$ runs over the self-adjoint operators of
$\Fourier ( \id _L)$.   
\end{itemize}
\end{theo}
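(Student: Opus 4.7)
The plan is to combine Theorems \ref{the:P0} and \ref{the:P1} with the observation from Proposition \ref{prop:cat_demi_forme} that each pointwise half-form morphism $\Psi(x) : \delta_x \rightarrow \delta_{\Phi(x)}$ is unitary and has adjoint equal to its $\dcomp$-inverse. The composition statement then follows almost immediately: if $(T_k) \in U(\Phi_L,\Psi)$ and $(T_k') \in U(\Phi_L', \Psi')$, the product $T_k' T_k$ is unitary as a composition of unitaries, and Theorem \ref{the:P1} places it in $\Fourier(\Phi_L' \circ \Phi_L)$ with principal symbol $\Psi' \circ_\demi \Psi$, which is exactly the definition of the target set.

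For non-emptiness I would use polar decomposition. By the surjectivity in Theorem \ref{the:P0}, pick any $(S_k) \in \Fourier(\Phi_L)$ with $\sigma(S_k) = \Psi$. Inspecting Schwartz kernels shows $(S_k^*) \in \Fourier(\Phi_L^{-1})$ with principal symbol the pointwise adjoint of $\Psi$, so by Theorem \ref{the:P1}, $(S_k^* S_k) \in \Fourier(\id_L)$ with principal symbol $\Psi^* \circ_\demi \Psi = \id_\delta$ by Proposition \ref{prop:cat_demi_forme}. Exactness of Theorem \ref{the:P0} gives $S_k^* S_k = \id + R_k$ with $\|R_k\| = O(k^{-1})$, so for $k$ large the square root $(S_k^* S_k)^{-1/2}$ is defined by a convergent binomial series. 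Setting $T_k := S_k (S_k^* S_k)^{-1/2}$ produces a unitary; provided $(S_k^* S_k)^{-1/2}$ belongs to $\Fourier(\id_L)$, one concludes $(T_k) \in U(\Phi_L, \Psi)$ because its principal symbol is $\Psi \circ_\demi \id_\delta = \Psi$ (at $\Phi = \id$ the $\zeta^{1/2}$-factor in (\ref{eq:comp_mor}) is trivially $1$).

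For the characterization of $U(\id_L, \id_\delta)$, one direction is that any self-adjoint $(T_k) \in \Fourier(\id_L)$ has uniformly bounded operator norm (its symbol is bounded and Theorem \ref{the:P0} controls the $O(k^{-1})$-remainder), so $\|k^{-1} T_k\| = O(k^{-1})$ and $\exp(i k^{-1} T_k)$ is a unitary of the form $\id + O(k^{-1})$; once this exponential is shown to belong to $\Fourier(\id_L)$, its principal symbol is forced to be $\id_\delta$. Conversely, for $(U_k) \in U(\id_L, \id_\delta)$ exactness gives $U_k = \id + O(k^{-1})$, so $\log U_k$ is defined by its Neumann series and $T_k := -ik \log U_k$ is self-adjoint and satisfies $\exp(ik^{-1} T_k) = U_k$; the remaining point is to check $T_k \in \Fourier(\id_L)$.

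The main obstacle in all three parts is therefore the same functional-calculus step: closure of $\Fourier(\id_L)$ under the operations $X \mapsto (\id + X)^{-1/2}$, $X \mapsto \exp X$ and $X \mapsto \log(\id + X)$ applied to $X \in \Fourier(\id_L) \cap O(k^{-1})$. The plan here is to build the required FIO order by order in $k^{-1}$ using the exact sequence in Theorem \ref{the:P0} and the composition law in Theorem \ref{the:P1}: the successive symbol coefficients are produced by algebraic identities in the commutative algebra $\Ci(M,\Hom(\delta,\delta))$, and a Borel-type summation of these coefficients yields an FIO whose difference with the analytic functional-calculus expression is $O(k^{-\infty})$. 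This is the standard symbolic-calculus argument for algebras of Toeplitz-type operators, and the analytic ingredients needed to justify each step are available from \cite{oim_qm} and \cite{oim_mc}.
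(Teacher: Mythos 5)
Your argument follows the same route as the paper's (itself only an outline): the composition statement is read off from Theorem~\ref{the:P1}, non-emptiness is obtained by polar decomposition $T_k = S_k (S_k^* S_k)^{-1/2}$ after noting that $\sigma(S_k^* S_k)=\Psi^* \dcomp \Psi = \id_\delta$ by Proposition~\ref{prop:cat_demi_forme} so that exactness in Theorem~\ref{the:P0} gives $S_k^*S_k=\id+O(k^{-1})$, and the characterization of $U(\id_L,\id_\delta)$ is reduced to functional calculus in the algebra $\Fourier(\id_L)$, which the paper handles by citing proposition~12 of \cite{oim_bt} and you handle by the equivalent symbol-by-symbol construction. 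The two outlines agree in every essential step.
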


\begin{proof} We only give an outline since the ideas of the proof are
  standard. To show that $U( \Phi_L, \Psi)$ is not empty, consider a
  Fourier integral operator $(T_k)$ of $\Fourier( \Phi_L)$ with symbol
$\Psi$. Then its adjoint is a Fourier integral operator of $\Fourier(
\Phi_L^{-1})$ with symbol
$\Psi^*$. By proposition \ref{prop:cat_demi_forme}, $\Psi^*$ is the
inverse of $\Psi$. So by theorem \ref{the:P1}, $(T_k ^* T_k)$ is a Fourier
integral operator of $\Fourier(\id_L)$ with symbol the
identity. $\Fourier(\id_L)$ is the algebra of Toeplitz operators. By
ellipticity, $T_k ^* T_k$ is an invertible self-adjoint operator when $k$ is
sufficiently large. By changing the first values of $T_k$, $T_k ^*
T_k$ is invertible for any $k$. Then using the functional calculus of Toeplitz
operators (cf. proposition 12 of \cite{oim_bt}), one proves that $ (T_k^* T_k)^{-1/2}$ is a Toeplitz operator
with principal symbol equal to $1$. This implies that $\bigl( T_k (T_k^*
T_k)^{-1/2} \bigr)$ belongs to $U( \Phi_L, \Psi)$. 

The second part of the theorem follows directly from theorem \ref{the:P1}. To
show the last part, one constructs the operator $T_k$ by successive
approximations using the functional calculus of
Toeplitz operators. 
\end{proof}

\subsection{Trace estimates}

Consider two automorphisms $\Phi_L, \Psi$ of the prequantum
bundle $L$ and the half-form bundle $\delta$  respectively which lift the same
symplectomorphism $\Phi$ of $M$. 
\begin{theo}  \label{sec:trace-estimates}
Assume that the graph of $\Phi$
intersects transversally the diagonal of $M^2$. Then for any $(T_k)
\in U( \Phi_L, \Psi)$, one has 
$$ \operatorname{ Tr} ( T_k) = \sum_{x = \Phi (x) } \frac{ e^{i
    \frac{\pi}{p'} m _p( g_x, z_x)} u_x^k } { | \det ( \id - g_x ) |^{1/2}} + O(k^{-1}) $$
where for any fixed point $x$ of $\Phi$, 
\begin{itemize} 
\item $g_x$ is the linear tangent map to $\Phi$ at $x$ and $z_x \in \C$ is the trace of the endomorphism $\Phi_{\delta,x} :
  \delta_x \rightarrow \delta_x$
\item $p' = p$ (resp. $2p$) if $p$ is even (resp. odd) and  $m_p ( g_x, z_x)
  \in \Z$ mod $2p'\Z$ is the index
  defined in section~\ref{sec:gener-half-form}. 
\item  $u_x \in \C$ is the trace of the endomorphism $\Phi_{L,x} :
  L_x \rightarrow L_x$
\end{itemize}
\end{theo}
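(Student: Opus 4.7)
The approach is to deduce this trace estimate from the general trace formula of Theorem \ref{the:trace-estimate} applied with auxiliary bundle $K = \delta$, and then to rewrite each fixed-point contribution using the index $m_p$ of section \ref{sec:gener-half-form}. Since $(T_k) \in U(\Phi_L, \Psi)$ is by definition a Fourier integral operator of $\Fourier(\Phi_L)$ with symbol $\Psi$, Theorem \ref{the:trace-estimate} immediately yields
$$\operatorname{Tr}(T_k) = \sum_{x = \Phi(x)} z_x \operatorname{det}^{1/2}\bigl(\tfrac{1}{2}\id + i A_x\bigr)\, u_x^k + O(k^{-1}),$$
where $z_x$ is the trace of the endomorphism $\Psi(x): \delta_x \to \delta_x$ of the half-form line, $u_x$ is the trace of $\Phi_L(x)$, and $A_x = \tfrac{1}{2}(\id + g_x)(\id - g_x)^{-1} j_x$ with $g_x = T_x \Phi$.

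Next I would translate each summand into the metaplectic language. At a fixed point $x$, the map $\Psi(x)$ is an endomorphism of the generalized half-form line $(\delta_x, \varphi_x)$ of $(T_x M, E_x)$ lifting $g_x$, so by the defining compatibility relation it satisfies $\varphi_x \circ \Psi(x)^{\otimes 2p} = \Psi_{g_x, E_x, E_x}^{\otimes p} \circ \varphi_x$. As recalled at the beginning of section \ref{sec:metaplectic-group}, the endomorphism $\Psi_{g_x, E_x, E_x}$ of $\wedge^{\top} E_x^*$ is scalar multiplication by $\det(g_x^{-1}\pi_{E_x, g_x E_x} : E_x \to E_x)$. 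Identifying $\Psi(x)$ with multiplication by its trace $z_x$, the compatibility therefore reduces to
$$z_x^{2p} = \det{}^p\bigl(g_x^{-1}\pi_{E_x, g_x E_x} : E_x \to E_x\bigr),$$
so that $(g_x, z_x)$ belongs to $\Mp_p(T_x M, E_x)$.

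The assumption that the graph of $\Phi$ meets the diagonal transversally at $(x,x)$ is equivalent to the invertibility of $\id - g_x$, placing $(g_x, z_x)$ in the locus where the index $m_p(g_x, z_x) \in \Z/2p'\Z$ is defined. By the very definition of $m_p$ in section \ref{sec:gener-half-form} (itself a direct application of Proposition \ref{prop:def-index}), one has
$$z_x \operatorname{det}^{1/2}\bigl(\tfrac{1}{2}\id + i A_x\bigr) = \frac{e^{i \frac{\pi}{p'} m_p(g_x, z_x)}}{|\det(\id - g_x)|^{1/2}}.$$
Substituting this equality into each summand of the trace formula produces the claimed expression.

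The only delicate point is the identification of $\Psi_{g_x, E_x, E_x}$ with the determinant scalar, which is what funnels the half-form datum at a fixed point into the metaplectic constraint and thus unlocks the index definition; once this is recognized, the remainder of the argument is a mechanical assembly of Theorem \ref{the:trace-estimate} and the definition of $m_p$, so no serious obstacle remains beyond careful bookkeeping of the square roots and of the $2p$-th roots of unity implicit in the index.
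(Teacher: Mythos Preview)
Your proof is correct and follows exactly the approach of the paper, which dispatches the result in one line as ``an immediate consequence of theorem \ref{the:trace-estimate} and the definition of the index.'' You have simply unpacked that sentence carefully, spelling out why $(g_x,z_x)\in\Mp_p(T_xM,E_x)$ at a fixed point and why transversality gives the invertibility of $\id-g_x$ needed to invoke the index; this is useful elaboration but not a different argument.
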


This is an immediate consequence of theorem \ref{the:trace-estimate}
and the definition of the index.

\appendix

\section{Linear Quantization} \label{sec:linear-quantization}

In this appendix we define a functor from the category of symplectic
space with polarization and half-form to the category of Hilbert
space. 

\subsection{Hilbert space} 
Let $S$ be a symplectic vector space. Consider the trivial bundle
$L_S$ with base $S$, fiber $\C$ and endowed with the connection $d + \frac{1}{i} \al$
where $\al \in \Om^1 ( S)$ is given by 
$$ \al |_x ( y) = \tfrac{1}{2} \om (x, y) .$$
Let $E$ be a positive polarization and $(\delta, \varphi)$ be a
half-form line. Abusing notation, we denote by $\delta$ the trivial bundle with
base $S$, fiber $\delta$ and endowed with the trivial connection. 

Consider the space
$ {\mathcal{H}} ( S, E, \delta, \varphi )$ which consists of
the holomorphic sections $\Psi$ of $ L_S \otimes \delta$ with respect to the
polarization $E$ such that
$$  \int_S | \Psi (x) |^2 \mu ( x) < \infty  
$$
where $\mu$ the Liouville measure of $S$. 
Here $| \Psi (x) |$ denote
the punctual norm in $L_S \otimes \delta$ . That a section $\Psi$ is
holomorphic with respect to $E$ means that its covariant derivative
with respect to any vector of $\overline E$ vanishes. 

$ {\mathcal{H}} ( S, E, \delta, \varphi )$ is an abstract presentation of the Bargmann space. It
is a Hilbert space with the scalar product
$ \int_S ( \Psi, \Psi')(x) \; \mu (x) $. 
\subsection{Unitary map} \label{sec:unitary-map}

Consider symplectic vector spaces $(S_a, E_a)$ and $(S_b, E_b)$
with positive  polarizations. Let $g$ be a linear symplectomorphism from
$S_a$ to $S_b$. 

\begin{lem} \label{lem:phase}
There exists a unique quadratic
function $ \Phi:  S_b \times S_a \rightarrow \C$ vanishing on the graph
of $g$ and such that $\exp (\Phi)$ is a holomorphic section of $L_{S_b}
\boxtimes \overline{L}_{S_a}$ with respect to $E_b \times
\overline{E}_a$. 
\end{lem}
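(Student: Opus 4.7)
The strategy is first to describe all quadratic $\Phi$ for which $\exp(\Phi)$ is holomorphic with respect to $E_b \times \overline{E}_a$, and then to use the vanishing-on-graph condition to fix the remaining freedom.

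The bundle $L_{S_b} \boxtimes \overline{L}_{S_a}$ inherits the connection $d + \tfrac{1}{i}(\alpha_b - \alpha_a)$, so $\exp(\Phi)$ is holomorphic with respect to $E_b \times \overline{E}_a$ if and only if
$$v(\Phi) + \tfrac{1}{i}(\alpha_b - \alpha_a)(v) = 0 \quad \text{for every } v \in \overline{E}_b \times E_a.$$
Working on the complexification, decompose $x_b = x_b^+ + x_b^-$ and $x_a = x_a^+ + x_a^-$ along $E_b \oplus \overline{E}_b$ and $E_a \oplus \overline{E}_a$. Using that $\omega_b$ vanishes on $\overline{E}_b \times \overline{E}_b$ and $\omega_a$ on $E_a \times E_a$, a direct integration of the above PDE shows that the general quadratic solution has the form
$$\Phi(x_b, x_a) = -\tfrac{1}{2i}\omega_b(x_b^+, x_b^-) + \tfrac{1}{2i}\omega_a(x_a^-, x_a^+) + \Phi_{\mathrm{hol}}(x_b^+, x_a^-),$$
with $\Phi_{\mathrm{hol}}$ an arbitrary quadratic form on $E_b \times \overline{E}_a$.

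I next impose $\Phi(g(x_a), x_a) = 0$. Since this is a polynomial identity, it may be tested on all of $S_a \otimes \C$. Write the complex-linear extension of $g$ in block form with respect to the splittings above, with blocks $g_{\epsilon \eta}$ for $\epsilon, \eta \in \{+, -\}$. The key observation is that $g_{++} : E_a \to E_b$ is invertible: if $g_{++}(v) = 0$ then $g(v) \in g E_a \cap \overline{E}_b$, but $gE_a$ and $E_b$ are both positive polarizations of $S_b$ so this intersection is trivial. Hence, given $(y^+, x_a^-) \in E_b \times \overline{E}_a$, the equation $y^+ = g_{++}(x_a^+) + g_{+-}(x_a^-)$ determines $x_a^+$ uniquely as a linear function of $(y^+, x_a^-)$, and substituting into the vanishing condition forces $\Phi_{\mathrm{hol}}(y^+, x_a^-)$ to be an explicit quadratic form in its two arguments, proving existence and uniqueness at once.

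All operations preserve degree and homogeneity, so the resulting $\Phi$ is a bona fide quadratic function on the real space $S_b \times S_a$. The only substantive point, and the sole place where positivity of the polarizations is invoked, is the invertibility of the block $g_{++}$.
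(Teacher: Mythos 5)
Your argument is correct, and the key computations check out: the integration of the connection PDE does give the decomposition you claim, and the invertibility of $g_{++}$ (hence of the map $(x_a^+,x_a^-)\mapsto((gx_a)^+,x_a^-)$) is exactly what makes the vanishing condition determine $\Phi_{\mathrm{hol}}$ both existently and uniquely among quadratic forms. The route, however, is genuinely different from the paper's. The paper first reduces to the case $S_a=S_b=S$ and $g=\id$ (absorbing $g$ into the data, as in the proof of Proposition~\ref{prop:cat}), fixes a reference polarization $E$ with coordinates $z^i$, writes down explicit non-vanishing sections $s^a,s^b$ holomorphic for $E_a,E_b$, sets $\exp(-\Phi)=\exp(h)\,s^b\boxtimes\overline{s}^a$, and reduces the problem to producing a unique quadratic $h$ holomorphic for $E_b\times\overline{E}_a$ with prescribed restriction to the diagonal; this is then immediate because $(z_b^1,\ldots,z_b^n,\overline{z}_a^1,\ldots,\overline{z}_a^n)$ is a global linear coordinate system on $S$. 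Both proofs turn on the same geometric fact --- transversality $F_1\cap\overline{F}_2=0$ for positive polarizations --- which appears for you as the invertibility of $g_{++}$ (using $gE_a\cap\overline{E}_b=0$) and for the paper as the coordinate statement $E_b\cap\overline{E}_a=0$. What your version buys: it handles a general $g$ directly, stays coordinate-free, and cleanly separates the ``particular $+$ homogeneous'' structure of the solution space. What the paper's version buys: the explicit formula for $\Phi$ in the chosen coordinates, which it needs on the next page to compute the Gaussian integral in Proposition~\ref{prop:composition_noyau}; an abstract existence/uniqueness statement alone would not suffice there.
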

  
Consider now a half-form line $(\delta_i , \varphi_i)$ of $E_i$ for $i =
a, b$. 
Then for any morphism $(g, \Psi)$ from $a=(S_a, E_a ,\delta_a,
\varphi_a)$ to $b=(S_b, E_b ,\delta_b,
\varphi_b)$ we define a map from $\Hilb (a) $ to $\Hilb (b)$ by 
$$ \bigl( U(g, \Psi ) f \bigr)  (x) = (2 \pi ) ^{-n} \int_{S_a}  \exp ( \Phi (x, y) )
\Psi( f(y)) \mu_b (y) 
$$
where $\mu_b$ is the Liouville measure of $S_b$. 

\begin{theo} \label{theo:foncteur}
For any morphism $(g, \Psi)$, the operator $ U(g, \Psi )$ is unitary.  
Furthermore the map sending $(S, E, \delta, \varphi)$ to $\Hilb (S,
  E, \delta, \varphi)$ and $(g, \Psi)$ to $U(g, \Psi)$ is a functor
  from the category $\demi$ to the category of Hilbert spaces.
\end{theo}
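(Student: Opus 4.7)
The plan is to prove both statements by explicit Gaussian integration in the Bargmann realization, after a reduction to the case of a single symplectic vector space.

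First I would observe that the pullback by $g^{-1}$ identifies $\Hilb(a)$ isometrically with $\Hilb(S_b, gE_a, \delta_a, \varphi_a')$, where $\varphi_a'$ is the push-forward of $\varphi_a$ under $(g^{-1})^*$. The isometry comes from the symplectic invariance of the Liouville measure $\mu$ and of the connection form $\alpha$ (which is determined intrinsically by $\omega$). Under this identification, $U(g, \Psi)$ factors through a Bargmann-type transform relating two Hilbert spaces attached to different polarizations of a single symplectic space $S_b$, with $g$ replaced by the identity. This lets me reduce to the case $S_a = S_b = S$ and $g = \id$, where the only data are two polarizations $E_a, E_b$ and the half-form morphism $\Psi$.

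For unitarity, I would compute $U(g,\Psi)^* U(g,\Psi)$ as an integral operator on $\Hilb(a)$. By Fubini its Schwartz kernel is
\[ K(y,y') = (2\pi)^{-2n} (\Psi^* \otimes \Psi) \int_{S_b} \exp\bigl( \overline{\Phi(x,y)} + \Phi(x,y') \bigr)\, \mu_b(x). \]
The quadratic form in $x$ is negative definite by positivity of $E_b$, so the Gaussian integral is explicit and yields a determinant prefactor (built from $\pi_{E_a, gE_a}$) times the reproducing kernel of $\Hilb(a)$. The half-form condition $\varphi_b \circ \Psi^{\otimes 2} = \Psi_{g, E_a, E_b}\circ \varphi_a$ is designed precisely so that $|\Psi|^2$ cancels the determinant prefactor, leaving $U^*U = \id$. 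The same argument in the opposite order gives $UU^* = \id$.

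For the functoriality statement, the composition $U(g', \Psi') \circ U(g, \Psi)$ is again an integral operator whose kernel is a Gaussian integral of $\exp(\Phi'(x,z)+\Phi(z,y))$ over $z \in S_b$, tensored with $\Psi'\circ \Psi$. Holomorphicity and the vanishing of $\Phi, \Phi'$ along the respective graphs of $g', g$ force the result to have the form $\exp(\Phi''(x,y))$ where $\Phi''$ is the phase attached to $g'g$, multiplied by a scalar determinantal factor. Checking the integrand on the graph of $g'g$ reduces the scalar to a ratio of determinants of projection maps between the three polarizations $g'gE_a, g'E_b, E_c$. By definition (\ref{eq:def_zeta}) this ratio is $\zeta(g'gE_a, g'E_b, E_c)$, so we get the composition law up to a sign coming from the choice of square root.

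The main obstacle is fixing this sign to match the continuous branch of $\zeta^{1/2}$. I would resolve it by a homotopy argument: both sides of the identity $U(g',\Psi')\circ U(g,\Psi) = U((g',\Psi')\dcomp(g,\Psi))$ depend continuously on the polarization data, and for $g = g' = \id$ and $E_a = E_b = E_c$ both operators reduce to the identity on the Bargmann space and the equality is manifest. Since the set of triples $(E_a, E_b, E_c)$ of positive polarizations is contractible, continuity forces the correct sign everywhere. The identity axiom $U(\id, \id_\delta) = \id_{\Hilb}$ follows from the same Gaussian calculation applied to the reproducing kernel, completing the verification that $(S,E,\delta,\varphi) \mapsto \Hilb(S,E,\delta,\varphi)$ and $(g,\Psi) \mapsto U(g,\Psi)$ is a functor.
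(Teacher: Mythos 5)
Your approach matches the paper's own proof: reduce to a single symplectic vector space (as in the proof of Proposition~\ref{prop:cat}), compose the Bargmann-type kernels by explicit Gaussian integration (the paper's Proposition~\ref{prop:composition_noyau}), identify the resulting determinantal factor with $\zeta^{1/2}$, and fix the branch of the square root by continuity over the contractible space of positive polarizations. The only differences are presentational: the paper works in explicit linear complex coordinates $z^i$, $U^a$, and first defines the operators $V(\Psi)$ on slowly-increasing sections of $L_S\otimes\delta$ before restricting to the Bargmann subspace via the reproducing kernel $V(\id_{\delta_a})$.
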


The elementary but long proof of this result will be provided somewhere
else. Applying the functor to the automorphism group of a symplectic
space with polarization and half-form, we obtain the well-known
metaplectic representation. 

\bibliography{biblio}

\end{document}